\documentclass[reqno,11pt]{amsart}
\usepackage{amsmath, amssymb, amsthm, mathrsfs}
\usepackage{geometry}
\usepackage{color}
\usepackage{setspace}
\usepackage[hidelinks]{hyperref}

\allowdisplaybreaks

\newtheorem{theorem}{Theorem}[section]
\newtheorem{lemma}[theorem]{Lemma}
\newtheorem{proposition}[theorem]{Proposition}
\newtheorem{definition}[theorem]{Definition}
\newtheorem{remark}[theorem]{Remark}

\numberwithin{equation}{section}

\begin{document}

\title[wave-MGT system with logarithmic nonlinearity]{Global existence and finite-time blow-up for a strongly damped wave--MGT system with fully subcritical logarithmic nonlinearity}

\author{Tae Gab Ha}

\address{Department of Mathematics and Institute of Pure and Applied Mathematics, Jeonbuk National University, Jeonju 54896, Republic of Korea}

\email{tgha@jbnu.ac.kr}

\subjclass[2020]{35L57; 35B40; 93D23; 35B44}

\keywords{Wave-MGT system; logarithmic nonlinearity; existence of solution; energy decay; finite-time blow-up}

\begin{abstract}
We study a coupled strongly damped wave--Moore--Gibson--Thompson (MGT) system with logarithmic source $f(u)=|u|^{\gamma-2}u\ln|u|$
in the full Sobolev-subcritical range $2<\gamma<2^*:=2n/(n-2)$. The main difficulty is that, in this full range, the logarithmic nonlinearity does not admit the standard compactness and difference estimates available in the lower subcritical regime. Using the augmented variable $w=v+\tau v_t$, we derive the exact energy-dissipation identity for the strongly damped system and exploit the associated coupled potential-well structure.

For the local theory, a Faedo--Galerkin scheme combined with a closed nonlinear differential inequality and a spatial domain-splitting argument yields local existence. The strong damping provides the additional regularity needed to prove uniqueness throughout the full subcritical range and to establish a continuation principle. Below the well depth $d_\alpha$, we then obtain a sharp dynamical dichotomy: solutions with initial data in the stable set exist globally and decay exponentially, whereas solutions with initial data in the unstable set blow up in finite time.

The blow-up argument reveals a structural cancellation specific to the augmented formulation: once the logarithmic contribution is reconstructed through the exact energy identity, the unfavorable MGT residual in Levine's concavity functional is absorbed algebraically. This allows the concavity method to close without imposing any additional sign condition on the initial velocities.
\end{abstract}

\maketitle

\section{Introduction}

We consider the following coupled system:
\begin{equation}\label{eq:main}
\begin{cases}
u_{tt} - \Delta u - \Delta u_t + \alpha(v + \tau v_t) = f(u), & \text{in } \Omega \times (0, \infty), \\
\tau v_{ttt} + v_{tt} - \Delta v - b\Delta v_t + \alpha u = 0, & \text{in } \Omega \times (0, \infty), \\
u = v = 0, & \text{on } \partial\Omega \times (0, \infty), \\
u(x,0) = u_0(x), \quad u_t(x,0) = u_1(x), & \text{in } \Omega, \\
v(x,0) = v_0(x), \quad v_t(x,0) = v_1(x), \quad v_{tt}(x,0) = v_2(x), & \text{in } \Omega,
\end{cases}
\end{equation}
where $\Omega \subset \mathbb{R}^n$ ($n \ge 3$) is a bounded domain with smooth boundary $\partial\Omega$. Here, $\tau > 0$ denotes the thermal relaxation time, $b$ is the acoustic damping parameter, and $\alpha$ represents the coupling strength. For $u \neq 0$, the source term is given by
\begin{equation*}
f(u)=|u|^{\gamma-2}u\ln|u|,
\qquad
2<\gamma<2^*:=\frac{2n}{n-2}.
\end{equation*}
We set $f(0)=0$ and $F(0)=0$ by continuous extension, where
\begin{equation*}
F(s)=\int_0^s f(r)\,dr
\end{equation*}
denotes the primitive of $f$.

The Moore--Gibson--Thompson (MGT) equation, a third-order partial differential equation in time, has become a fundamental model in high-intensity nonlinear acoustics. It incorporates finite thermal relaxation effects and thereby removes the infinite propagation paradox inherent in classical Fourier-based heat conduction models \cite{kaltenbacher2011wellposedness}, \cite{lasiecka2016moore}, \cite{pellicer2019wellposedness}. In recent years, its well-posedness, chaotic behavior \cite{conejero2015chaotic}, optimal control \cite{bucci2019feedback}, and structural connections with linear viscoelasticity \cite{dell2017moore}, \cite{marchand2012abstract} have been extensively studied. Recent studies have also examined memory effects, regularity issues, and nonlinear extensions for the MGT equation; see, for instance, Dell'Oro, Lasiecka, and Pata \cite{delloro2016critical}, Bucci and Pandolfi \cite{bucci2020regularity}, and Nikoli\'c and Said-Houari \cite{nikolic2021jmgt}. When a MGT component is coupled with a structural wave equation, one obtains a genuinely hybrid hyperbolic system in which propagation, damping, and relaxation effects interact in a nontrivial way.

The third-order acoustic equation now known as the Moore--Gibson--Thompson equation traces back to Stokes's 1851 study of thermal effects on sound propagation \cite{stokes1851examination}; its modern name refers to the later works of Moore and Gibson \cite{moore1960weak} and Thompson \cite{thompson1972compressible}. In contemporary nonlinear acoustics, a Jordan--Moore--Gibson--Thompson equation is obtained from the balance laws for a compressible heat-conducting fluid by replacing Fourier's law with a Maxwell--Cattaneo law \cite{cattaneo1958forme}, which incorporates a finite thermal relaxation time $\tau>0$; see \cite{nikolic2021jmgt} for a derivation of the JMGT equation and \cite{jordan2014second} for the corresponding thermally relaxing-gas framework. Its linearization about a homogeneous quiescent state has the form
\begin{equation*}
\tau\psi_{ttt}+\psi_{tt}-c^2\Delta\psi-b\Delta\psi_t=0,
\qquad
b=\delta+\tau c^2,
\end{equation*}
where $c>0$ is the sound speed and $\delta>0$ is the sound-diffusivity parameter. The nonlinear JMGT framework arises in high-intensity ultrasound, whereas the displayed linear equation is the corresponding small-perturbation MGT model. In the normalization used in \eqref{eq:main}, the coefficient of $\psi_{tt}$ is normalized to one and the sound speed is scaled to $c=1$; hence $\delta>0$ is equivalent to $b>\tau$, and $b-\tau=\delta$ is the normalized sound diffusivity.

The first equation in \eqref{eq:main} is a semilinear strongly damped wave equation, with $-\Delta u_t$ representing Kelvin--Voigt-type strong damping; related logarithmic-source models
include \cite{lian2020global}. After introducing the augmented variable $w=v+\tau v_t$, the interaction becomes the bounded symmetric zero-order pair $\alpha w$ and $\alpha u$, of the type used in abstract weak-coupling and indirect-stabilization theory \cite{alabau2002indirect}. The restriction $|\alpha|<\lambda_1$ is an analytical small-coupling condition ensuring coercivity of the associated quadratic form. The full system \eqref{eq:main} is not claimed to arise from a single unified physical derivation; rather, it is a mathematically coupled model combining two established scalar components. The specific indirectly damped wave--MGT architecture from which the present model is obtained was formulated and analyzed in the author's earlier work \cite{ha2026wavemgt}. The present paper studies its strongly damped extension, and its contribution is analytical rather than modeling-based.

The logarithmic nonlinearity in \eqref{eq:main} has a well-established background in mathematical physics and nonlinear evolution equations. Such nonlinearities arise in models from quantum mechanics \cite{bialynicki1975logarithmic} and inflationary cosmology \cite{gorka2009logarithmic}, while their mathematical study in evolution equations goes back to the classical work of Cazenave and Haraux \cite{cazenave1980equations}. More recently, they have been incorporated into wave-type equations with damping, memory, and acoustic boundary effects; see, for example, Peng and Zhang \cite{peng2024memory}. 

At the analytical level, the logarithmic factor complicates both the compactness passage for approximate nonlinearities and the difference estimates required for uniqueness near the Sobolev threshold. Consequently, many well-posedness and stabilization results have been obtained only in the lower subcritical range
\begin{equation*}
2<\gamma<\frac{2(n-1)}{n-2}.
\end{equation*}
This restriction is not merely a technical artifact. It reflects the difficulty of controlling the logarithmic nonlinear terms by the available energy bounds in approximation and difference arguments. For example, recent works on strongly damped wave equations \cite{di2020initial}, on viscoelastic equations \cite{ha2021viscoelastic}, and on models involving nonlinear damping or $p$-Laplacian operators \cite{wu2023global} remain confined to the lower subcritical range $2<\gamma<\frac{2(n-1)}{n-2}$. 

For the global-versus-blow-up classification pursued here, the relevant variational framework is the classical potential-well method, originating in the work of Sattinger \cite{sattinger1968global} and in the Payne--Sattinger theory \cite{payne1975saddle}, while Levine's concavity method \cite{levine1974instability} provides the complementary mechanism for finite-time blow-up. The method uses the Nehari functional and the associated well depth to divide the phase space below the variational threshold into invariant stable and unstable regions. The dynamical roles of these regions were developed further for parabolic and hyperbolic equations by Ikehata and Suzuki \cite{ikehata1996stable} and, for nonlinear damped wave equations with source terms, by Todorova \cite{todorova1999stable}. Liu \cite{liu2003vacuum} introduced a family of potential wells and the vacuum-isolating phenomenon for semilinear wave equations, and Liu and Zhao \cite{liu2006potential} extended this construction to broader semilinear
hyperbolic and parabolic classes. In the dissipative wave setting, Vitillaro \cite{vitillaro1999global} established positive-energy global nonexistence under suitable relations between the damping and source exponents, whereas Gazzola and Squassina \cite{gazzola2006global} combined potential-well invariance with weak or strong damping and obtained both unstable-set blow-up and examples of finite-time blow-up at high initial energy.

Applications of this framework to logarithmic nonlinearities have developed in both parabolic and hyperbolic settings. In the parabolic case, Chen, Luo, and Liu \cite{chen2015global} gave an early representative application of a family of potential wells to a logarithmic heat equation, combining the variational structure with the logarithmic Sobolev inequality. For wave equations with logarithmic sources, Ma and Fang \cite{ma2018energy} developed a family-of-wells analysis for a strongly damped model and derived energy-decay and infinite-time blow-up results. Lian and Xu \cite{lian2020global} treated below-depth and critical-depth regimes in the presence of weak and strong damping and also obtained a high-energy infinite-time blow-up result in the weakly damped case. These works, together with related models involving combined logarithmic--power nonlinearities \cite{lian2020product} and the further references cited in the surrounding discussion, constitute the closest scalar potential-well background for the present system. 

Within this landscape, the present paper studies a strongly damped coupled wave--MGT system throughout the full Sobolev-subcritical exponent range $2<\gamma<2^*$. The novelty does not lie in the abstract potential-well mechanism itself; rather, the classical stable/unstable-set dichotomy is implemented on the coupled variational geometry determined by $Q_\alpha$, with hybrid dissipation supplied by the wave and MGT components. Accordingly, within the below-well-depth regime $\mathcal E(0)<d_\alpha$, the paper establishes exponential decay in the stable set and finite-time blow-up in the unstable set.

\noindent\textbf{Relation to the indirectly damped wave--MGT study.}
The present system is the strongly damped extension of the indirectly damped wave--MGT system analyzed in the author's earlier work \cite{ha2026wavemgt}. At the level of the differential equations, the essential modification is the addition of the strong damping term $-\Delta u_t$ to the wave equation. Apart from this term and the enlarged exponent range, the two problems have the same MGT equation, the same logarithmic source, the same zero-order coupling through $v+\tau v_t$, the same homogeneous Dirichlet boundary conditions, and the same structural conditions $b>\tau>0$ and $|\alpha|<\lambda_1$.

\smallskip

\noindent\textbf{Common augmented and variational framework.}
Both papers use the augmented variable
\begin{equation*}
w=v+\tau v_t
\end{equation*}
and the corresponding phase state $(u,u_t,w,w_t,v_t)$. In both systems, this change of variables converts the coupling into the symmetric pair $\alpha w$ and $\alpha u$, while $v_t$ remains the dissipative MGT coordinate. The two papers also have the same stored-energy functional; in particular, its static part is determined by the quadratic form
\begin{equation*}
Q_\alpha(u,w)
=
\|\nabla u\|_2^2+\|\nabla w\|_2^2+2\alpha(u,w)
\end{equation*}
and the logarithmic potential
\begin{equation*}
F(s)=\frac1\gamma |s|^\gamma\ln|s|-\frac1{\gamma^2}|s|^\gamma.
\end{equation*}
Consequently, the Nehari functional $\mathcal I_\alpha$, the potential functional $\mathcal J_\alpha$, the Nehari manifold, the well depth $d_\alpha$, and the stable potential-well geometry are common to the two analyses. The formal unstable branch used in the present paper is likewise the standard complementary branch determined by the same functionals. At the methodological level, both works use a Faedo--Galerkin approximation, a weak-level justification of the exact energy identity, strong temporal continuity of the augmented state, and a first-contact argument for the potential-well constraint. Accordingly, no novelty is claimed here for the variable $w=v+\tau v_t$, the stored-energy functional, the form $Q_\alpha$, or the associated static potential-well geometry; these ingredients were already used in \cite{ha2026wavemgt}.

\smallskip

\noindent\textbf{Dissipation and full-range local theory.}
Although the stored-energy functional $\mathcal E$ is common to both models, their dissipation laws are different. The indirectly damped system in \cite{ha2026wavemgt} satisfies, for $0\le s\le t$,
\begin{equation*}
\mathcal E(t)
+(b-\tau)\int_s^t\|\nabla v_t(\sigma)\|_2^2\,d\sigma
=\mathcal E(s),
\end{equation*}
whereas the present strongly damped system satisfies
\begin{equation*}
\mathcal E(t)
+\int_s^t
\Bigl(
\|\nabla u_t(\sigma)\|_2^2
+(b-\tau)\|\nabla v_t(\sigma)\|_2^2
\Bigr)\,d\sigma
=\mathcal E(s).
\end{equation*}
The additional direct dissipation gives $u_t\in L^2(0,T;H_0^1(\Omega))$ on every finite existence interval. This is the key regularity behind the local well-posedness and continuation theory for arbitrary energy data throughout $2<\gamma<2^*$. By contrast, \cite{ha2026wavemgt} treats $2<\gamma<2(n-1)/(n-2)$ and proves global weak well-posedness for data in the stable well (Theorem~4.6 therein). More precisely, the energy-space uniqueness estimate there requires a number $\mu>0$ such that
\begin{equation*}
\gamma-2+\mu<\frac{2}{n-2},
\end{equation*}
which forces the lower exponent range. For the present system, if $z=u^1-u^2$ is the difference of two wave components, the strong damping yields $z_t\in L^2(0,T;H_0^1(\Omega))$, allowing the nonlinear difference estimate to close for every $\gamma<2^*$. The resulting arbitrary-data local theory and continuation principle are stated in Proposition~\ref{prop:local}.

\smallskip

\noindent\textbf{Different stable-side dynamics.}
For $0<|\alpha|<\lambda_1$, Theorem~5.2 of
\cite{ha2026wavemgt} proves convergence to the zero equilibrium only under the additional assumption that the full trajectory is relatively compact in the natural phase space. Its conclusion is therefore a conditional qualitative LaSalle convergence theorem. Moreover, Proposition~2.2 of that paper identifies wave-branch modal roots satisfying
\begin{equation*}
\operatorname{Re}s_{k,\pm}
=
-\frac{\alpha^2}{2(b-\tau)\lambda_k^2}
+O(\lambda_k^{-5/2})
\longrightarrow0^{-},
\end{equation*}
which is presented there as a high-frequency spectral obstruction to a uniform exponential-decay statement. In contrast, every solution of the present strongly damped system starting below $d_\alpha$ in the stable set satisfies an exponential energy estimate, with no orbit-precompactness hypothesis. The proof uses an adapted Lyapunov functional and relies essentially on the direct coercive dissipation $\|\nabla u_t\|_2^2$; see Theorem~\ref{thm:main_stable}.

\smallskip

\noindent\textbf{Unstable-side dynamics.}
\cite{ha2026wavemgt} does not analyze the unstable branch and contains no finite-time blow-up result. The present paper proves positive invariance of the unstable set below the well depth and finite-time blow-up for every corresponding solution. In the concavity argument, the MGT component produces an unfavorable residual term. Reconstructing the logarithmic contribution through the exact energy identity yields an algebraic cancellation that absorbs this residual and closes Levine's argument without an additional sign condition on the initial velocities; see Theorem~\ref{thm:blowup}.

Thus, the independent contribution of the present paper is not the augmented variable or the static potential-well construction. It consists in determining how the additional strong damping changes the analytical and dynamical theory: full Sobolev-subcritical local well-posedness and continuation for arbitrary energy data, the strong-damping uniqueness estimate, exponential stabilization throughout the stable well without an orbit-precompactness assumption, and finite-time blow-up in the unstable set through the exact-energy cancellation described above. All arguments are carried out in full in the present paper; no theorem from \cite{ha2026wavemgt} is invoked as a black box.

A recent result reaching the full Sobolev-subcritical range $2<\gamma<2^*$ for a logarithmic wave equation is \cite{ha2026logarithm}, which treats a single strongly damped wave equation. Extending that type of analysis to the present coupled wave--MGT system is substantially more delicate, because the system is asymmetric and combines different dissipative mechanisms. In particular, one must simultaneously handle the logarithmic compactness problem, the nonlinear uniqueness issue, and the coupling-induced obstruction in the blow-up argument.

The main purpose of this paper is to determine the global dynamics of \eqref{eq:main} throughout the full Sobolev-subcritical range $2<\gamma<2^*$. Building on the augmented formulation $w=v+\tau v_t$ and the static energy geometry shared with \cite{ha2026wavemgt}, we analyze the consequences specific to the direct strong damping $-\Delta u_t$: the full-range local theory, stable-set exponential decay, and unstable-set finite-time blow-up. The main analytical obstacles can then be isolated and resolved systematically.

Compared with the lower-range theory, the present problem involves three main difficulties:
\begin{enumerate}
    \item \textbf{Loss of compactness.}
    In the full range $2<\gamma<2^*$, the logarithmic source no longer admits the standard polynomial controls used in the lower subcritical regime. To obtain $m$-independent bounds for the Galerkin approximations, one must derive a closed nonlinear differential inequality that exploits the strong damping.

    \item \textbf{Uniqueness obstruction.}
    The derivative $f'(u)$ exhibits logarithmic growth, so the nonlinear difference term cannot be estimated directly in the natural energy space.

    \item \textbf{Concavity residual in the blow-up argument.}
    In the unstable regime, the MGT coupling produces an unfavorable negative residual term in the second derivative of Levine's concavity functional \cite{levine1974instability}. In standard approaches, such a term typically forces additional sign conditions on the initial velocities.
\end{enumerate}

Our analysis resolves these difficulties as follows:
\begin{itemize}
    \item \textbf{Compactness via closed bounds and domain splitting.}
    We derive a closed nonlinear differential inequality for the quadratic Galerkin energy, which yields uniform local bounds. We then combine this with a spatial domain-splitting argument to isolate the logarithmic singularity, obtain uniform integrability of the source term, and pass to the limit by Vitali's theorem.

    \item \textbf{Uniqueness via strong damping.}
    The strong damping yields the additional regularity
    \begin{equation*}
    z_t\in L^2(0,T;H_0^1(\Omega))
    \end{equation*}
    for the difference variable, which is precisely what is needed to control the nonlinear term throughout the full subcritical range.

    \item \textbf{Blow-up via exact-energy cancellation.}
    After reconstructing the logarithmic term through the exact energy identity, we obtain an algebraic cancellation that absorbs the unfavorable MGT residual. This allows Levine's concavity method to close without any auxiliary sign condition on the initial velocities.
\end{itemize}

The main results of the paper may be summarized as follows. First, we establish local well-posedness and a continuation principle in the natural phase spaces associated with both the original and augmented formulations. Next, for initial data below the well depth $d_\alpha$ and inside the stable set, we prove invariance of the stable set, global existence, and uniform exponential decay of the exact energy by means of an adapted Lyapunov functional. On the other hand, for initial data below the same threshold but in the unstable set, we prove invariance of the unstable set and finite-time blow-up. In this way, the coupled strongly damped wave--MGT system exhibits a sharp dichotomy between exponential stabilization and blow-up in the full Sobolev-subcritical regime.

The remainder of the paper is organized as follows. In Section 2, we introduce the augmented formulation, the exact energy, and the coupled potential-well structure, and we state the main results. Section 3 is devoted to local well-posedness, the continuation principle, and strong continuity of weak solutions. In Section 4, we prove global existence and exponential stabilization in the stable set. Finally, Section 5 treats the unstable set and establishes finite-time blow-up.

\section{Preliminaries and Main Results}

In this section, we introduce the augmented formulation, the exact energy, and the static potential-well structure associated with \eqref{eq:main}. We also collect the auxiliary estimates needed in the later analysis and conclude by stating the main results of the paper.

We denote the standard $L^p(\Omega)$-norm by $\|\cdot\|_p$ and the $L^2(\Omega)$ inner product by $(\cdot,\cdot)$. We define the phase space for the initial data as
\begin{equation}
\mathcal{H}_0
=
H_0^1(\Omega)\times L^2(\Omega)\times H_0^1(\Omega)\times H_0^1(\Omega)\times L^2(\Omega).
\end{equation}
Correspondingly, the phase space for the state variable $Y(t)=(u,u_t,w,w_t,v_t)$ in the augmented formulation is defined as
\begin{equation}
\mathcal{H}
=
H_0^1(\Omega)\times L^2(\Omega)\times H_0^1(\Omega)\times L^2(\Omega)\times H_0^1(\Omega).
\end{equation}

\noindent\textbf{Assumption (A).}
We impose the following structural conditions.
\begin{itemize}
    \item \textbf{(A1) MGT Stability:} $b>\tau>0$.
    \item \textbf{(A2) Coupling Strength:} $|\alpha|<\lambda_1$, where $\lambda_1$ is the first Dirichlet eigenvalue of $-\Delta$ in $H_0^1(\Omega)$.
    \item \textbf{(A3) Fully Subcritical Exponent:} $2<\gamma<2^*:=\frac{2n}{n-2}$.
\end{itemize}

\subsection{The Augmented Formulation and Weak Solution}

We begin by rewriting \eqref{eq:main} in an augmented form that reveals the underlying coupled energy structure. Introducing the change of variable $w=v+\tau v_t$, the system is equivalently reformulated as
\begin{equation}\label{eq:augmented}
\begin{cases}
u_{tt}-\Delta u-\Delta u_t+\alpha w=f(u),\\
w_{tt}-\Delta w-(b-\tau)\Delta v_t+\alpha u=0.
\end{cases}
\end{equation}

For given initial data $I_0:=(u_0,u_1,v_0,v_1,v_2)\in\mathcal{H}_0$, the corresponding augmented initial states mapped into $\mathcal{H}$ are defined by
\begin{equation}\label{eq:initial_w}
w_0:=v_0+\tau v_1\in H_0^1(\Omega),
\qquad
w_1:=v_1+\tau v_2\in L^2(\Omega).
\end{equation}

\begin{definition}[Weak Solution]\label{def:weak}
Let $T>0$. A state
\begin{equation*}
Y(t)=(u,u_t,w,w_t,v_t)
\end{equation*}
is called a weak solution of \eqref{eq:main} on $[0,T]$ if
\begin{align*}
&u,w \in L^\infty(0,T;H_0^1(\Omega))
\cap C_w([0,T];H_0^1(\Omega))
\cap C([0,T];L^2(\Omega)),\\
&u_t,w_t \in L^\infty(0,T;L^2(\Omega))
\cap C_w([0,T];L^2(\Omega)),\\
&v_t \in L^\infty(0,T;H_0^1(\Omega))
\cap C_w([0,T];H_0^1(\Omega))
\cap C([0,T];L^2(\Omega)),
\end{align*}
with additional regularity
\begin{equation*}
u_t\in L^2(0,T;H_0^1(\Omega)),
\qquad
u_{tt},\,w_{tt}\in L^2(0,T;H^{-1}(\Omega)).
\end{equation*}
The variable $v$ is recovered by
\begin{equation*}
v:=w-\tau v_t.
\end{equation*}
Moreover, $v$ satisfies
\begin{equation*}
\partial_t v = v_t
\qquad\text{in }\mathcal D'((0,T)\times\Omega).
\end{equation*}
The state attains the initial conditions and satisfies \eqref{eq:augmented} in the distributional sense.
\end{definition}

\begin{remark}
Since $w_t, v_t \in L^\infty(0,T;L^2(\Omega))$, the quantity
\begin{equation*}
v_{tt}:=\tau^{-1}(w_t-v_t)
\end{equation*}
is well defined in $L^\infty(0,T;L^2(\Omega))$. Thus the stronger regularity of the recovered variable $v$ used later in the continuation argument is obtained a posteriori from the augmented state.
\end{remark}

The total exact energy is defined as
\begin{equation}\label{eq:energy}
\mathcal{E}(t)
:=
\frac12\|u_t\|_2^2
+\frac12\|w_t\|_2^2
+\frac{\tau(b-\tau)}{2}\|\nabla v_t\|_2^2
+\frac12 Q_\alpha(u,w)
-\int_\Omega F(u)\,dx,
\end{equation}
where
\begin{equation*}
Q_\alpha(u,w)
=
\|\nabla u\|_2^2+\|\nabla w\|_2^2+2\alpha(u,w),
\qquad
F(u)=\int_0^u f(r)\,dr.
\end{equation*}
Under Assumption~(A2), the quadratic form $Q_\alpha$ is coercive:
\begin{equation}\label{eq:coercivity}
Q_\alpha(u,w)
\ge
c_\alpha\bigl(\|\nabla u\|_2^2+\|\nabla w\|_2^2\bigr),
\end{equation}
where
\begin{equation*}
c_\alpha:=1-\frac{|\alpha|}{\lambda_1}>0.
\end{equation*}

\subsection{Coupled Potential Well Structure}

We next introduce the static functionals that encode the potential-well geometry of the coupled system. These quantities will play a central role in the stable/unstable set dichotomy developed in Sections~4 and~5.

We define
\begin{align}
\mathcal{I}_\alpha(u,w)
&=
Q_\alpha(u,w)-\int_\Omega |u|^\gamma\ln|u|\,dx,\\
\mathcal{J}_\alpha(u,w)
&=
\frac12 Q_\alpha(u,w)-\int_\Omega F(u)\,dx.
\end{align}
These functionals satisfy the algebraic identity
\begin{equation}\label{eq:algebraic}
\mathcal{J}_\alpha(u,w)
=
\frac{\gamma-2}{2\gamma}Q_\alpha(u,w)
+\frac{1}{\gamma}\mathcal{I}_\alpha(u,w)
+\frac{1}{\gamma^2}\|u\|_\gamma^\gamma.
\end{equation}

We define the Nehari manifold by
\begin{equation*}
\mathcal{N}_\alpha
:=
\{(u,w)\neq(0,0)\mid \mathcal{I}_\alpha(u,w)=0\},
\end{equation*}
and the corresponding potential well depth by
\begin{equation*}
d_\alpha
:=
\inf_{(u,w)\in\mathcal{N}_\alpha}\mathcal{J}_\alpha(u,w).
\end{equation*}

\begin{lemma}[Positivity of Depth]\label{lem:depth}
There exists $\kappa_0>0$ such that $Q_\alpha(u,w)\ge \kappa_0$ uniformly on $\mathcal{N}_\alpha$. Consequently, $d_\alpha>0$.
\end{lemma}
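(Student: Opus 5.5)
On $\mathcal N_\alpha$ we have $Q_\alpha(u,w)=\int_\Omega |u|^\gamma\ln|u|\,dx$. The plan is to bound the right-hand side from above by a superquadratic power of $Q_\alpha$. Coercivity \eqref{eq:coercivity} then forces $Q_\alpha$ away from zero, and \eqref{eq:algebraic} turns this into $d_\alpha>0$.

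First we note that $u\neq 0$ for $(u,w)\in\mathcal N_\alpha$. If $u=0$, then $\mathcal I_\alpha(0,w)=\|\nabla w\|_2^2=0$, so $w=0$, which contradicts $(u,w)\neq(0,0)$. Next we fix $\mu>0$ small enough that $\gamma+\mu\le 2^*$; this is possible by (A3). Since $\ln s\le 0$ for $s\le 1$ and $\ln s\le \frac{1}{e\mu}s^{\mu}$ for $s>1$, we get
\begin{equation*}
\int_\Omega |u|^\gamma\ln|u|\,dx\le \int_{\{|u|>1\}}|u|^\gamma\ln|u|\,dx\le \frac{1}{e\mu}\|u\|_{\gamma+\mu}^{\gamma+\mu}\le \frac{B^{\gamma+\mu}}{e\mu}\|\nabla u\|_2^{\gamma+\mu}.
\end{equation*}
Here $B$ is the Sobolev embedding constant for $H_0^1(\Omega)\hookrightarrow L^{\gamma+\mu}(\Omega)$, which is valid because $\gamma+\mu\le 2^*$. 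By \eqref{eq:coercivity}, $\|\nabla u\|_2^2\le c_\alpha^{-1}Q_\alpha(u,w)$, and therefore
\begin{equation*}
Q_\alpha(u,w)\le C_*\,Q_\alpha(u,w)^{(\gamma+\mu)/2},\qquad C_*:=\frac{B^{\gamma+\mu}}{e\mu\,c_\alpha^{(\gamma+\mu)/2}}.
\end{equation*}
Coercivity and $u\neq0$ give $Q_\alpha(u,w)>0$, so we can divide by it. Because $(\gamma+\mu)/2>1$, this yields $Q_\alpha(u,w)\ge \kappa_0:=C_*^{-2/(\gamma+\mu-2)}$. This constant is independent of $(u,w)$.

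For the depth, we use \eqref{eq:algebraic} with $\mathcal I_\alpha=0$:
\begin{equation*}
\mathcal J_\alpha(u,w)=\frac{\gamma-2}{2\gamma}Q_\alpha(u,w)+\frac1{\gamma^2}\|u\|_\gamma^\gamma\ge \frac{\gamma-2}{2\gamma}\kappa_0.
\end{equation*}
Taking the infimum over $\mathcal N_\alpha$ gives $d_\alpha\ge \frac{\gamma-2}{2\gamma}\kappa_0>0$.

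The only delicate point is the logarithmic upper bound. We must choose $\mu$ with $\gamma+\mu\le 2^*$, which is exactly where the strict inequality $\gamma<2^*$ is used. We must also discard the region $\{|u|\le1\}$, where $\ln|u|<0$ is harmless for an upper bound. Everything else is routine.
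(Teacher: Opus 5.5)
Your proposal is correct and follows essentially the same route as the paper: you discard $\{|u|\le 1\}$, bound $\ln s\le C s^{\mu}$ on $\{|u|>1\}$, and apply Sobolev and coercivity to get $Q_\alpha\le C_*Q_\alpha^{(\gamma+\mu)/2}$, then divide and use \eqref{eq:algebraic} with $\mathcal I_\alpha=0$. The explicit constant $1/(e\mu)$ and the remark that $u\neq0$ on $\mathcal N_\alpha$ are harmless refinements; coercivity together with $(u,w)\neq(0,0)$ already gives $Q_\alpha>0$.
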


\begin{proof}
Fix any $\eta\in(0,2^*-\gamma)$. Since $\ln s\le C_\eta s^\eta$ for all $s\ge1$, we have
\begin{equation*}
|s|^\gamma\ln|s|
\le
C_\eta |s|^{\gamma+\eta}
\qquad\text{for all } |s|>1,
\end{equation*}
while $|s|^\gamma\ln|s|\le0$ for $|s|\le1$. Hence, for any $(u,w)\in\mathcal{N}_\alpha$,
\begin{equation*}
Q_\alpha(u,w)
=
\int_\Omega |u|^\gamma\ln|u|\,dx
\le
\int_{\{|u|>1\}} |u|^\gamma\ln|u|\,dx
\le
C_\eta\|u\|_{\gamma+\eta}^{\gamma+\eta}.
\end{equation*}
By the Sobolev embedding $H_0^1(\Omega)\hookrightarrow L^{\gamma+\eta}(\Omega)$, there exists a constant $S_{\gamma+\eta}>0$ such that
\begin{equation*}
\|u\|_{\gamma+\eta}\le S_{\gamma+\eta}\|\nabla u\|_2.
\end{equation*}
Combining this with the coercivity estimate $Q_\alpha(u,w)\ge c_\alpha\|\nabla u\|_2^2$, we obtain
\begin{equation*}
Q_\alpha(u,w)
\le
C_\eta S_{\gamma+\eta}^{\gamma+\eta}\|\nabla u\|_2^{\gamma+\eta}
\le
C_\eta S_{\gamma+\eta}^{\gamma+\eta}c_\alpha^{-(\gamma+\eta)/2}
Q_\alpha(u,w)^{(\gamma+\eta)/2}.
\end{equation*}
Set
\begin{equation*}
C_*
:=
C_\eta S_{\gamma+\eta}^{\gamma+\eta}c_\alpha^{-(\gamma+\eta)/2},
\qquad
p:=\frac{\gamma+\eta}{2}>1.
\end{equation*}
Then
\begin{equation*}
Q_\alpha(u,w)\le C_*Q_\alpha(u,w)^p.
\end{equation*}
Since $(u,w)\in\mathcal{N}_\alpha$ and $(u,w)\neq(0,0)$, \eqref{eq:coercivity} implies $Q_\alpha(u,w)>0$. Dividing by $Q_\alpha(u,w)$ gives
\begin{equation*}
1\le C_*Q_\alpha(u,w)^{p-1},
\end{equation*}
hence
\begin{equation*}
Q_\alpha(u,w)\ge \kappa_0:=C_*^{-1/(p-1)}>0
\qquad\text{for all } (u,w)\in\mathcal{N}_\alpha.
\end{equation*}

Therefore, using $\mathcal{I}_\alpha(u,w)=0$ in \eqref{eq:algebraic}, we get
\begin{equation*}
\mathcal{J}_\alpha(u,w)
=
\frac{\gamma-2}{2\gamma}Q_\alpha(u,w)
+\frac{1}{\gamma^2}\|u\|_\gamma^\gamma
\ge
\frac{\gamma-2}{2\gamma}Q_\alpha(u,w)
\ge
\frac{\gamma-2}{2\gamma}\kappa_0.
\end{equation*}
Taking the infimum over $(u,w)\in\mathcal{N}_\alpha$, we conclude that
\begin{equation*}
d_\alpha\ge \frac{\gamma-2}{2\gamma}\kappa_0>0.
\end{equation*}
\end{proof}

The stable and unstable subsets of the phase space are defined by
\begin{align}
\mathcal{W}_\alpha
&=
\bigl\{(u,w)\in H_0^1(\Omega)\times H_0^1(\Omega)
\mid
\mathcal{J}_\alpha(u,w)<d_\alpha,\ \mathcal{I}_\alpha(u,w)>0
\bigr\}\cup\{(0,0)\},\\
\mathcal{U}_\alpha
&=
\bigl\{(u,w)\in H_0^1(\Omega)\times H_0^1(\Omega)
\mid
\mathcal{J}_\alpha(u,w)<d_\alpha,\ \mathcal{I}_\alpha(u,w)<0
\bigr\}.
\end{align}

\begin{lemma}[Local Positivity Near the Origin]\label{lem:local_positivity}
There exists a constant $\rho_0>0$ such that for any $(u,w)$ satisfying $Q_\alpha(u,w)\le \rho_0$, we have $\mathcal{I}_\alpha(u,w)\ge \frac14 Q_\alpha(u,w)$.
\end{lemma}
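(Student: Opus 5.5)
The plan is to reuse the superlinear bound from the proof of Lemma~\ref{lem:depth}. Fix $\eta\in(0,2^*-\gamma)$. For $|s|\le1$ we have $|s|^\gamma\ln|s|\le0$, and for $|s|>1$ we have $|s|^\gamma\ln|s|\le C_\eta|s|^{\gamma+\eta}$. Hence
\begin{equation*}
\int_\Omega|u|^\gamma\ln|u|\,dx\le C_\eta\|u\|_{\gamma+\eta}^{\gamma+\eta}.
\end{equation*}
The Sobolev embedding $H_0^1(\Omega)\hookrightarrow L^{\gamma+\eta}(\Omega)$ applies because $\gamma+\eta<2^*$. Combining it with the coercivity estimate \eqref{eq:coercivity}, in the form $\|\nabla u\|_2^2\le c_\alpha^{-1}Q_\alpha(u,w)$, gives
\begin{equation*}
\int_\Omega|u|^\gamma\ln|u|\,dx\le C_*\,Q_\alpha(u,w)^{p},\qquad p=\frac{\gamma+\eta}{2}>1,
\end{equation*}
with the same constant $C_*$ as in Lemma~\ref{lem:depth}.

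Next I insert this bound into the definition of $\mathcal I_\alpha$:
\begin{equation*}
\mathcal I_\alpha(u,w)\ge Q_\alpha(u,w)-C_*Q_\alpha(u,w)^{p}=Q_\alpha(u,w)\bigl(1-C_*Q_\alpha(u,w)^{p-1}\bigr).
\end{equation*}
I then choose
\begin{equation*}
\rho_0:=\Bigl(\frac{3}{4C_*}\Bigr)^{1/(p-1)}>0.
\end{equation*}
Since $Q_\alpha\ge0$ and $p>1$, the condition $Q_\alpha(u,w)\le\rho_0$ forces $C_*Q_\alpha^{p-1}\le\frac34$. The bracket is then at least $\frac14$, and so $\mathcal I_\alpha(u,w)\ge\frac14Q_\alpha(u,w)$.

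The case $Q_\alpha(u,w)=0$ needs a separate check. By \eqref{eq:coercivity} it forces $u=w=0$, so both sides vanish, and $Q_\alpha^{p}$ is well defined because $Q_\alpha\ge0$. The only point requiring real care is that the logarithm is unbounded: one cannot compare $|u|^\gamma\ln|u|$ with $|u|^\gamma$ directly. The $\eta$-loss absorbs the logarithm, and the choice $\eta<2^*-\gamma$ keeps the Sobolev embedding available. No new obstacle arises beyond what was already handled in Lemma~\ref{lem:depth}.
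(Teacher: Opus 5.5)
Your proposal is correct and follows essentially the same route as the paper. Both bound $\int_\Omega|u|^\gamma\ln|u|\,dx\le C_*Q_\alpha(u,w)^p$ via the $\eta$-loss, the Sobolev embedding and coercivity, and then factor $\mathcal I_\alpha\ge Q_\alpha(1-C_*Q_\alpha^{p-1})$; your explicit $\rho_0=(3/(4C_*))^{1/(p-1)}$ is exactly the paper's condition $C_*\rho_0^{p-1}\le\frac34$.
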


\begin{proof}
Fix any $\eta\in(0,2^*-\gamma)$ and let $p:=\frac{\gamma+\eta}{2}>1$. As in the proof of Lemma~\ref{lem:depth}, we have
\begin{equation*}
\int_\Omega |u|^\gamma\ln|u|\,dx
\le
\int_{\{|u|>1\}} |u|^\gamma\ln|u|\,dx
\le
C_\eta\|u\|_{\gamma+\eta}^{\gamma+\eta}.
\end{equation*}
Using again the Sobolev embedding $H_0^1(\Omega)\hookrightarrow L^{\gamma+\eta}(\Omega)$ and the coercivity estimate $Q_\alpha(u,w)\ge c_\alpha\|\nabla u\|_2^2$, we obtain
\begin{equation*}
\int_\Omega |u|^\gamma\ln|u|\,dx
\le
C_\eta S_{\gamma+\eta}^{\gamma+\eta}\|\nabla u\|_2^{\gamma+\eta}
\le
C_*Q_\alpha(u,w)^p,
\end{equation*}
where
\begin{equation*}
C_*:=C_\eta S_{\gamma+\eta}^{\gamma+\eta}c_\alpha^{-(\gamma+\eta)/2}.
\end{equation*}

Choose $\rho_0>0$ small enough so that $C_*\rho_0^{p-1}\le \frac34$. If $Q_\alpha(u,w)\le \rho_0$, then
\begin{align*}
\mathcal{I}_\alpha(u,w)
&=
Q_\alpha(u,w)-\int_\Omega |u|^\gamma\ln|u|\,dx\\
&\ge
Q_\alpha(u,w)-C_*Q_\alpha(u,w)^p\\
&=
\bigl(1-C_*Q_\alpha(u,w)^{p-1}\bigr)Q_\alpha(u,w)\\
&\ge
\bigl(1-C_*\rho_0^{p-1}\bigr)Q_\alpha(u,w)\\
&\ge
\frac14 Q_\alpha(u,w).
\end{align*}
\end{proof}

\begin{lemma}[Uniform Gap in the Stable Set]\label{lem:gap}
For any $(u,w)\in\mathcal{W}_\alpha$ satisfying $\mathcal{J}_\alpha(u,w)\le E_0<d_\alpha$, there exists a constant
\begin{equation*}
\delta_0
=
1-\left(\frac{E_0}{d_\alpha}\right)^{(\gamma-2)/\gamma}\in(0,1)
\end{equation*}
such that
\begin{equation}
\mathcal{I}_\alpha(u,w)\ge \delta_0 Q_\alpha(u,w).
\end{equation}
\end{lemma}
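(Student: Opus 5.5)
The approach is a fibering (scaling) argument along the ray $\lambda\mapsto(\lambda u,\lambda w)$. We locate the point $\lambda_*\ge1$ where this ray crosses the Nehari manifold $\mathcal N_\alpha$. Comparing $\mathcal J_\alpha$ at $\lambda_*$ with $d_\alpha$ bounds $\lambda_*$ from below, and the Nehari equation at $\lambda_*$ then bounds the logarithmic term at $\lambda=1$.

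\textbf{Trivial cases.} If $(u,w)=(0,0)$ there is nothing to prove. Write
\begin{equation*}
L:=\int_\Omega|u|^\gamma\ln|u|\,dx,
\qquad
N:=\|u\|_\gamma^\gamma,
\qquad
Q:=Q_\alpha(u,w)>0.
\end{equation*}
If $L\le0$, then $\mathcal I_\alpha(u,w)=Q-L\ge Q\ge\delta_0Q$, and we are done. So assume $L>0$, which forces $u\neq0$ and $N>0$.

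\textbf{Step 1: the crossing point.} Using $\ln|\lambda u|=\ln\lambda+\ln|u|$, the fiber of the Nehari functional is
\begin{equation*}
\mathcal I_\alpha(\lambda u,\lambda w)=\lambda^2\bigl(Q-g(\lambda)\bigr),
\qquad
g(\lambda):=\lambda^{\gamma-2}\bigl(L+N\ln\lambda\bigr).
\end{equation*}
Since $(u,w)\in\mathcal W_\alpha$, we have $g(1)=L<Q$. Also $g$ is continuous on $[1,\infty)$ and $g(\lambda)\to\infty$, because $\gamma>2$ and $N>0$. By the intermediate value theorem there is $\lambda_*>1$ with $g(\lambda_*)=Q$. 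Hence $(\lambda_*u,\lambda_*w)\in\mathcal N_\alpha$, and therefore $d_\alpha\le\mathcal J_\alpha(\lambda_*u,\lambda_*w)$.

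\textbf{Step 2: lower bound on $\lambda_*$.} On $\mathcal N_\alpha$, identity \eqref{eq:algebraic} gives
\begin{equation*}
\mathcal J_\alpha(\lambda_*u,\lambda_*w)=\frac{\gamma-2}{2\gamma}\lambda_*^2Q+\frac{1}{\gamma^2}\lambda_*^\gamma N
\le\lambda_*^\gamma\Bigl(\frac{\gamma-2}{2\gamma}Q+\frac1{\gamma^2}N\Bigr),
\end{equation*}
where we used $\lambda_*^2\le\lambda_*^\gamma$ since $\lambda_*>1$. Applying \eqref{eq:algebraic} at $\lambda=1$ together with $\mathcal I_\alpha(u,w)>0$, the bracket is at most $\mathcal J_\alpha(u,w)\le E_0$. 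Thus $d_\alpha\le\lambda_*^\gamma E_0$, which gives
\begin{equation*}
\lambda_*^{-(\gamma-2)}\le\Bigl(\frac{E_0}{d_\alpha}\Bigr)^{(\gamma-2)/\gamma}.
\end{equation*}

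\textbf{Step 3: the gap.} Since $\ln\lambda_*>0$ and $N>0$, the equation $g(\lambda_*)=Q$ gives $Q\ge\lambda_*^{\gamma-2}L$. Combined with Step 2,
\begin{equation*}
L\le\lambda_*^{-(\gamma-2)}Q\le\Bigl(\frac{E_0}{d_\alpha}\Bigr)^{(\gamma-2)/\gamma}Q.
\end{equation*}
Therefore $\mathcal I_\alpha(u,w)=Q-L\ge\delta_0Q$.

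\textbf{Remaining checks.} We must verify $\delta_0\in(0,1)$. This holds because $0<E_0<d_\alpha$ by Lemma~\ref{lem:depth}, and $E_0>0$ since $\mathcal J_\alpha>0$ on $\mathcal W_\alpha\setminus\{(0,0)\}$ by \eqref{eq:algebraic}.

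The main point needing care is Step 2. There the mismatch between the quadratic scaling $\lambda^2$ of $Q$ and the scaling $\lambda^\gamma$ of $N$ must be handled in the right direction, and this relies on $\lambda_*>1$. That is exactly why the case $L\le0$ is split off first and the crossing point is taken in $(1,\infty)$.
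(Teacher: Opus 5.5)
Your proof is correct and follows the paper's argument essentially step for step. You find the Nehari crossing $\lambda_*>1$ on the fiber $\lambda\mapsto(\lambda u,\lambda w)$. Then you use $\mathcal{I}_\alpha(u,w)>0$ and $\lambda_*^{2}\le\lambda_*^{\gamma}$ to get $d_\alpha\le\lambda_*^{\gamma}E_0$, and read off $\int_\Omega|u|^\gamma\ln|u|\,dx\le\lambda_*^{2-\gamma}Q_\alpha(u,w)$ from the Nehari equation via $\ln\lambda_*>0$. The differences are cosmetic: the paper splits off only $u\equiv0$, since the crossing exists whenever $u\neq0$, so your $L\le0$ case is unnecessary but harmless, and your explicit check that $\delta_0\in(0,1)$ is a small addition the paper omits.
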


\begin{proof}
If $u\equiv0$, the inequality is trivial. Assume therefore that $u\not\equiv0$. Since $\mathcal{I}_\alpha(u,w)>0$ and $\lim_{\lambda\to\infty}\mathcal{I}_\alpha(\lambda u,\lambda w)=-\infty$, continuity provides $\lambda^*>1$ such that
\begin{equation*}
\mathcal{I}_\alpha(\lambda^*u,\lambda^*w)=0.
\end{equation*}
By the definition of $d_\alpha$, we have
\begin{equation*}
\mathcal{J}_\alpha(\lambda^*u,\lambda^*w)\ge d_\alpha.
\end{equation*}
Applying \eqref{eq:algebraic} and using $\mathcal{I}_\alpha(\lambda^*u,\lambda^*w)=0$, we obtain
\begin{equation}
d_\alpha
\le
\mathcal{J}_\alpha(\lambda^*u,\lambda^*w)
=
(\lambda^*)^\gamma
\left[
\frac{\gamma-2}{2\gamma}(\lambda^*)^{2-\gamma}Q_\alpha(u,w)
+\frac{1}{\gamma^2}\|u\|_\gamma^\gamma
\right].
\end{equation}
Since $\gamma>2$ and $\lambda^*>1$, we have $(\lambda^*)^{2-\gamma}<1$, and thus
\begin{equation*}
\mathcal{J}_\alpha(\lambda^*u,\lambda^*w)
<
(\lambda^*)^\gamma \mathcal{J}_\alpha(u,w).
\end{equation*}
It follows that
\begin{equation*}
d_\alpha<(\lambda^*)^\gamma E_0,
\qquad\text{that is,}\qquad
(\lambda^*)^{-\gamma}<\frac{E_0}{d_\alpha}.
\end{equation*}

On the other hand, the relation $\mathcal{I}_\alpha(\lambda^*u,\lambda^*w)=0$ implies
\begin{equation*}
\int_\Omega |u|^\gamma\ln|u|\,dx
\le
(\lambda^*)^{2-\gamma}Q_\alpha(u,w).
\end{equation*}
Therefore,
\begin{equation*}
\mathcal{I}_\alpha(u,w)
=
Q_\alpha(u,w)-\int_\Omega |u|^\gamma\ln|u|\,dx
\ge
\bigl(1-(\lambda^*)^{2-\gamma}\bigr)Q_\alpha(u,w).
\end{equation*}
Since
\begin{equation*}
(\lambda^*)^{2-\gamma}
=
\bigl((\lambda^*)^{-\gamma}\bigr)^{(\gamma-2)/\gamma}
<
\left(\frac{E_0}{d_\alpha}\right)^{(\gamma-2)/\gamma},
\end{equation*}
we conclude that
\begin{equation*}
\mathcal{I}_\alpha(u,w)\ge \delta_0 Q_\alpha(u,w).
\end{equation*}
\end{proof}

\subsection{Main Theorems}

The preceding lemmas provide the basic static ingredients of the coupled potential-well method: positivity of the depth, positivity near the origin, and a uniform gap inside the stable set. We are now in a position to state the main results proved in the subsequent sections.

\begin{proposition}[Local Well-Posedness and Continuation]\label{prop:local}
Suppose Assumption~(A) holds. For any $I_0\in\mathcal{H}_0$, there exists a maximal existence time $T_{\max}\in(0,\infty]$ such that \eqref{eq:main} admits a unique local weak solution $Y\in C([0,T_{\max});\mathcal{H})$. Furthermore, if $T_{\max}<\infty$, then
\begin{equation*}
\limsup_{t\uparrow T_{\max}}\|Y(t)\|_{\mathcal{H}}=\infty.
\end{equation*}
\end{proposition}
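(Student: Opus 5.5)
The plan is to work entirely in the augmented formulation \eqref{eq:augmented}, written as a first-order system for $(u,w,v_t)$ with the MGT relation $\tau \partial_t v_t = w_t - v_t$. We construct Faedo--Galerkin approximations $u^m,w^m,v_t^m$ in the span of the first $m$ Dirichlet eigenfunctions. The approximating ODE system is locally solvable because $f$ is continuous and locally Lipschitz on finite-dimensional spaces.

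\textbf{Uniform local bounds.} We test the $u^m$-equation with $u^m_t$ and the $w^m$-equation with $w^m_t$. We also test the relation $\tau v^m_{tt}=w^m_t-v^m_t$ against $-(b-\tau)\Delta v^m_t$. Adding these gives the Galerkin analogue of the exact energy identity. The term $\int f(u^m)u^m_t$ is not controlled by $\mathcal E$, so we instead work with the quadratic energy $E_m(t)=\tfrac12\|u^m_t\|^2+\tfrac12\|w^m_t\|^2+\tfrac{\tau(b-\tau)}2\|\nabla v^m_t\|^2+\tfrac12 Q_\alpha(u^m,w^m)$. For this quantity
\[
E_m'+\|\nabla u^m_t\|_2^2+(b-\tau)\|\nabla v^m_t\|_2^2=(f(u^m),u^m_t).
\]
We then use $|f(s)|\le C(1+|s|^{\gamma-1+\eta})$ with $\gamma+\eta<2^*$, the embedding $u^m_t\in H_0^1\hookrightarrow L^{2^*}$, and H\"older with exponent $(2^*)'$. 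Since $(\gamma-1+\eta)(2^*)'\le 2^*$ for small $\eta$, Young's inequality gives $|(f(u^m),u^m_t)|\le \tfrac12\|\nabla u^m_t\|^2+C(1+E_m)^{\gamma-1+\eta}$. This closed inequality $E_m'\le C(1+E_m)^{q}$ yields a time $T_0>0$, depending only on $\|I_0\|_{\mathcal H_0}$, on which $E_m$ and $\int_0^{T_0}\|\nabla u^m_t\|^2$ are bounded uniformly in $m$. The strong damping is exactly what makes this step work in the full range.

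\textbf{Passage to the limit.} Aubin--Lions gives $u^m\to u$ strongly in $C([0,T_0];L^2)$ and a.e. For the source term we split $\Omega$ into $\{|u^m|\le 1\}$, where $|f|\le C$, and $\{|u^m|>1\}$, where $|f(u^m)|\le C|u^m|^{\gamma-1+\eta}$. This splitting isolates the logarithm. On the second set the uniform $L^{2^*/(\gamma-1+\eta)}$ bound is strictly above $(2^*)'$ integrability, so $f(u^m)$ is uniformly integrable in $L^{(2^*)'}(Q_{T_0})$. Vitali's theorem then gives $f(u^m)\to f(u)$ in $L^1$ and weakly in $L^2(0,T_0;H^{-1})$. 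The remaining terms are linear, so $Y$ is a weak solution with the regularity in Definition~\ref{def:weak}.

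\textbf{Uniqueness.} For two solutions, set $z=u^1-u^2$, $\zeta=w^1-w^2$, $\theta=v^1_t-v^2_t$. We first justify the energy identity for the difference at the weak level, via time mollification or by noting that $z_t\in L^2H^1_0$ makes $(f(u^1)-f(u^2),z_t)$ meaningful. This gives
\[
\tfrac{d}{dt}E_z+\|\nabla z_t\|^2+(b-\tau)\|\nabla\theta\|^2=(f(u^1)-f(u^2),z_t).
\]
We estimate $|f(u^1)-f(u^2)|\le C(1+|u^1|^{\gamma-2+\eta}+|u^2|^{\gamma-2+\eta})|z|$, since $|f'(s)|\le C(1+|s|^{\gamma-2+\eta})$. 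By H\"older with $z_t\in L^{2^*}$, $z\in L^{2^*}$ and $u^i\in L^{2^*}$, we need $(\gamma-2+\eta)/2^*+2/2^*\le 1$, which is $\gamma+\eta\le 2^*$, true for small $\eta$. Thus the right side is at most $\tfrac12\|\nabla z_t\|^2+C\|\nabla z\|^2$, and Gronwall gives $z=\zeta=\theta=0$. This is the step I expect to be the main obstacle. The delicate part is justifying the difference identity for weak solutions, where only $z_t\in L^2H_0^1$ rescues the pairing; I would obtain it by a Lions--Magenes argument using $z_{tt}\in L^2H^{-1}$ and $z_t\in L^2H_0^1$.

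\textbf{Strong continuity and continuation.} The same Lions--Magenes argument gives $u_t\in C([0,T];L^2)$. For the $w$-component, weak continuity together with the exact energy identity and its time reversibility on finite intervals gives continuity of the norms, so $Y\in C([0,T];\mathcal H)$. We define $T_{\max}$ as the supremum of existence times of solutions, which is well defined by uniqueness. If $T_{\max}<\infty$ but $\|Y(t)\|_{\mathcal H}\le M$ near $T_{\max}$, then restarting from $Y(t_0)$ with $t_0$ close to $T_{\max}$ gives existence time $T_0(M)$, independent of $t_0$. This extends the solution beyond $T_{\max}$, a contradiction. When restarting, the data must lie in $\mathcal H_0$, which requires $v_t(t_0)\in H_0^1$ and $v_{tt}=\tau^{-1}(w_t-v_t)\in L^2$; both are available a posteriori.
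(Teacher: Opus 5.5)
Your proposal follows the paper's proof essentially step for step: the closed differential inequality for the quadratic Galerkin energy (made possible by the strong damping), domain splitting plus Vitali for the logarithmic source, the uniqueness estimate closing at the exponent $2^*$ because $z_t\in L^2(0,T;H_0^1(\Omega))$, and the restart argument with a local time depending only on $\|Y(t_0)\|_{\mathcal H}$. One point needs correcting: Lions--Magenes justifies the difference identity only for the $z$-equation, whereas the real obstruction is the MGT component, where $\zeta_t$ lies only in $L^\infty(0,T;L^2(\Omega))$, so testing that equation by $\zeta_t$ (including the term $(b-\tau)(\nabla\theta,\nabla\zeta_t)$) requires the time regularization you list as an alternative, which is exactly the Steklov averaging the paper uses.
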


\begin{theorem}[Global Existence and Stabilization]\label{thm:main_stable}
Suppose Assumption~(A) holds. Let $I_0\in\mathcal{H}_0$. If $\mathcal{E}(0)<d_\alpha$ and the augmented state $(u_0,w_0)\in\mathcal{W}_\alpha$, then the local weak solution is global, that is, $T_{\max}=\infty$. The state remains in $\mathcal{W}_\alpha$ for all $t\ge0$, and there exist constants $C_0,\omega>0$ such that
\begin{equation*}
\mathcal{E}(t)\le C_0e^{-\omega t}
\qquad\text{for all } t\ge0.
\end{equation*}
\end{theorem}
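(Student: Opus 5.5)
We split the argument into three steps: invariance of $\mathcal W_\alpha$, globality, and exponential decay via a perturbed energy. Throughout we use the exact energy identity
\begin{equation*}
\mathcal E(t)+\int_s^t\bigl(\|\nabla u_t\|_2^2+(b-\tau)\|\nabla v_t\|_2^2\bigr)\,d\sigma=\mathcal E(s),
\end{equation*}
valid on $[0,T_{\max})$ for the weak solution of Proposition~\ref{prop:local}. It gives $\mathcal J_\alpha(u(t),w(t))\le\mathcal E(t)\le\mathcal E(0)<d_\alpha$.

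\textbf{Invariance.} Suppose the trajectory leaves $\mathcal W_\alpha$. By the strong continuity $Y\in C([0,T_{\max});\mathcal H)$ and the continuity of $\mathcal I_\alpha$ on $H_0^1\times H_0^1$, there is a first contact time $t^*$ with $\mathcal I_\alpha(u(t^*),w(t^*))=0$. We rule out both possibilities at $t^*$:
\begin{itemize}
\item If $(u,w)(t^*)\ne(0,0)$, then $(u,w)(t^*)\in\mathcal N_\alpha$, so $\mathcal J_\alpha\ge d_\alpha$ there, contradicting $\mathcal J_\alpha<d_\alpha$.
\item If $(u,w)(t^*)=(0,0)$, this is not an exit. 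On the preceding interval Lemma~\ref{lem:local_positivity} gives $\mathcal I_\alpha\ge\frac14Q_\alpha>0$ near the origin, so the trajectory cannot cross into $\{\mathcal I_\alpha<0\}$ through $(0,0)$.
\end{itemize}
Hence $(u,w)(t)\in\mathcal W_\alpha$ for all $t$.

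\textbf{Globality.} By \eqref{eq:algebraic}, on $\mathcal W_\alpha$ we have $\mathcal J_\alpha\ge\frac{\gamma-2}{2\gamma}Q_\alpha$. Therefore
\begin{equation*}
\mathcal E(t)\ge\tfrac12\|u_t\|_2^2+\tfrac12\|w_t\|_2^2+\tfrac{\tau(b-\tau)}2\|\nabla v_t\|_2^2+\tfrac{\gamma-2}{2\gamma}Q_\alpha .
\end{equation*}
Combined with \eqref{eq:coercivity}, this bounds $\|Y(t)\|_{\mathcal H}^2\le C\,\mathcal E(0)$ uniformly in time, so the continuation criterion of Proposition~\ref{prop:local} gives $T_{\max}=\infty$. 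The same inequality shows $\mathcal E(t)\simeq\|Y(t)\|_{\mathcal H}^2$.

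\textbf{Decay.} Define
\begin{equation*}
L(t)=\mathcal E(t)+\varepsilon\Phi(t),\qquad
\Phi=(u_t,u)+\tfrac12\|\nabla u\|_2^2+(w_t,w)+\tfrac{b-\tau}{2}\|\nabla w\|_2^2\ \text{(modified as needed)}.
\end{equation*}
For small $\varepsilon$, $L\simeq\mathcal E$. We compute $\Phi'$ by testing the first equation of \eqref{eq:augmented} with $u$ and the second with $w$. The source and coupling terms combine to $-\mathcal I_\alpha(u,w)+\|u_t\|_2^2+\|w_t\|_2^2$ plus cross terms. The strong damping term $-\Delta u_t$ is absorbed exactly by $\frac12\|\nabla u\|_2^2$ in $\Phi$. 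The term $(b-\tau)(\nabla v_t,\nabla w)$ is handled by Young's inequality against $\eta Q_\alpha+C_\eta\|\nabla v_t\|_2^2$.

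The key input is Lemma~\ref{lem:gap} with $E_0=\mathcal E(0)$, which gives $-\mathcal I_\alpha\le-\delta_0Q_\alpha$ and supplies coercive potential dissipation. The kinetic terms are controlled by the damping: $\|u_t\|_2^2\le\lambda_1^{-1}\|\nabla u_t\|_2^2$, directly damped. The term $\|w_t\|_2^2$ is the delicate one, since $w_t=v_t+\tau v_{tt}$ and $v_{tt}$ is not directly damped.

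\textbf{Main obstacle: control of $w_t$.} To produce $-c\|w_t\|_2^2$, we would add to $\Phi$ a further multiplier such as $-(w_t,\,w-v)$ or $\tau(w_t,v_t)$, exploiting $\tau v_t=w-v$. With this, the dissipation $\|\nabla v_t\|_2^2$ and $-\Delta$ acting on $w$ together yield the kinetic term of $w$. This is where $b>\tau$ enters.

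We then also control the term $\int F(u)$ inside $\mathcal E$ via $\mathcal E\le C(\|u_t\|^2+\|w_t\|^2+\|\nabla v_t\|^2+Q_\alpha)$, which uses $-\int F\le\frac1{\gamma^2}\|u\|_\gamma^\gamma\le CQ_\alpha$. Once $\varepsilon$ and the Young constants are chosen, we obtain $L'\le-c\,\mathcal E\le-\omega L$, and Gronwall's inequality gives $\mathcal E(t)\le C_0e^{-\omega t}$.

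Since the solution is only weak, all these multiplier computations would be carried out on Galerkin approximations or justified through the weak energy identity.
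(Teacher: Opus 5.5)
Your architecture matches the paper's: first-contact invariance of $\mathcal W_\alpha$, the bound $\mathcal J_\alpha\ge\frac{\gamma-2}{2\gamma}Q_\alpha$ plus the continuation principle for globality, and a perturbed energy built from $(u_t,u)+(w_t,w)$, the gap $\mathcal I_\alpha\ge\delta_0Q_\alpha$ of Lemma~\ref{lem:gap}, and a $w_t$--$v_t$ corrector. Your $-(w_t,w-v)=-\tau(w_t,v_t)$ is, up to bookkeeping, the paper's $3\tau\Phi_{\mathrm{kin}}$. However, the decay step needs three repairs before it closes.

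First, the term $\frac{b-\tau}{2}\|\nabla w\|_2^2$ in $\Phi$ is harmful. In the $w$-equation the damping acts on $v_t$, not on $w_t$, so nothing cancels its derivative $(b-\tau)(\nabla w,\nabla w_t)$. That derivative contains $\tau(b-\tau)(\nabla w,\nabla v_{tt})$, which no dissipation controls. Drop the term and use Young, as you also propose. Alternatively, replace it by $\frac{b-\tau}{2}\|\nabla v\|_2^2$, which turns $-(b-\tau)(\nabla v_t,\nabla w)$ into $-\tau(b-\tau)\|\nabla v_t\|_2^2$.

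Second, your two candidate correctors have opposite signs, and only $-(w_t,w-v)$ works; $+\tau(w_t,v_t)$ produces $+\|w_t\|_2^2$.

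Third, $-\int_\Omega F(u)\,dx\le\frac1{\gamma^2}\|u\|_\gamma^\gamma$ is equivalent to $\int_\Omega|u|^\gamma\ln|u|\,dx\ge0$. This fails for every $u\not\equiv0$ with $|u|<1$ a.e. Use instead $-F(s)\le C_Fs^2$, as the paper does. It holds because $|s|^{\gamma-2}(1+|\ln|s||)$ is bounded on $(0,e^{1/\gamma}]$ and $-F(s)\le0$ for $|s|\ge e^{1/\gamma}$.

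For the corrected corrector, use $\partial_t(w-v)=w_t-v_t$, $w-v=\tau v_t$ and the second equation of \eqref{eq:augmented}. They give
\begin{equation*}
-\frac{d}{dt}(w_t,w-v)=-\|w_t\|_2^2+(w_t,v_t)+\tau(\nabla w,\nabla v_t)+\tau(b-\tau)\|\nabla v_t\|_2^2+\tau\alpha(u,v_t).
\end{equation*}
So the good term $-\|w_t\|_2^2$ comes from differentiating the second factor, not from the dissipation and $-\Delta w$ as you suggest. The $w_{tt}$-part only generates error terms. Young's inequality sends these into small multiples of $\|\nabla u\|_2^2+\|\nabla w\|_2^2$, absorbed by $-\delta_0Q_\alpha$, and into $\|\nabla v_t\|_2^2$, absorbed by the dissipation once $\varepsilon$ is small.

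The derivative $\frac{d}{dt}(w_t,w)$ contributes $+\|w_t\|_2^2$, and $(w_t,v_t)\le\frac12\|w_t\|_2^2+\frac12\|v_t\|_2^2$. Hence the corrector must carry weight $k>2$ relative to $(w_t,w)$. The paper's choice $3\tau\Phi_{\mathrm{kin}}$ is $k=3$, written as $-3\tau^2\|v_{tt}\|_2^2$ against $\|w_t\|_2^2\le2\|v_t\|_2^2+2\tau^2\|v_{tt}\|_2^2$. With these repairs your argument is the paper's.
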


\begin{theorem}[Finite-Time Blow-up]\label{thm:blowup}
Suppose Assumption~(A) holds. Let $I_0\in\mathcal{H}_0$. If $\mathcal{E}(0)<d_\alpha$ and the augmented state $(u_0,w_0)\in\mathcal{U}_\alpha$, then the solution blows up in finite time, that is, $T_{\max}<\infty$ and
\begin{equation*}
\limsup_{t\uparrow T_{\max}}
\bigl(\|\nabla u(t)\|_2^2+\|\nabla w(t)\|_2^2\bigr)
=
\infty.
\end{equation*}
\end{theorem}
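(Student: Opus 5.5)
We prove Theorem~\ref{thm:blowup} by contradiction via Levine's concavity method, after first establishing invariance of $\mathcal U_\alpha$. \textbf{Step 1 (invariance).} By the energy identity, $\mathcal J_\alpha(u(t),w(t))\le\mathcal E(t)\le\mathcal E(0)<d_\alpha$ for all $t<T_{\max}$. Suppose $\mathcal I_\alpha$ fails to remain negative and let $t_0$ be the first contact time. Strong continuity of $Y$ in $\mathcal H$ from Proposition~\ref{prop:local} gives continuity of $\mathcal I_\alpha(u(t),w(t))$, so $\mathcal I_\alpha(u(t_0),w(t_0))=0$. On $[0,t_0)$ we have $\mathcal I_\alpha<0$, hence $Q_\alpha>\int|u|^\gamma\ln|u|\,dx$. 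Lemma~\ref{lem:local_positivity} then forces $Q_\alpha(u(t),w(t))>\rho_0$, which passes to $t_0$, so $(u(t_0),w(t_0))\ne(0,0)$. Thus $(u(t_0),w(t_0))\in\mathcal N_\alpha$ and $\mathcal J_\alpha\ge d_\alpha$, a contradiction.

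\textbf{Step 2 (quantitative gap).} For $(u,w)\in\mathcal U_\alpha$, scale to the Nehari manifold with some $\lambda^*\in(0,1)$. Using \eqref{eq:algebraic}, we expect
\[
-\mathcal I_\alpha(u,w)\ge \gamma\bigl(d_\alpha-\mathcal J_\alpha(u,w)\bigr)>0 .
\]
The idea is that the map $\lambda\mapsto\mathcal J_\alpha(\lambda u,\lambda w)$ has derivative $\lambda^{-1}\mathcal I_\alpha(\lambda u,\lambda w)$, and one compares the values at $\lambda=1$ and $\lambda=\lambda^*$; the $\|u\|_\gamma^\gamma$ term in \eqref{eq:algebraic} is exactly what makes $\gamma$ the right constant. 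It then follows that $-\mathcal I_\alpha(u(t),w(t))\ge\gamma(d_\alpha-\mathcal E(0))=:\gamma\delta>0$ uniformly in $t$.

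\textbf{Step 3 (concavity functional).} Assume $T_{\max}=\infty$. Consider
\[
M(t)=\|u\|_2^2+\|w\|_2^2+\int_0^t\bigl(\|\nabla u\|_2^2+c\|\nabla v_t\|_2^2\bigr)\,ds+\beta(t+t_1)^2 .
\]
Here the integral of $\|\nabla u\|^2$ compensates the strong damping, since $(\nabla u,\nabla u_t)=\tfrac12\tfrac{d}{dt}\|\nabla u\|^2$. The MGT term $(b-\tau)(\nabla v_t,\nabla w)$ is delicate: writing $w=v+\tau v_t$ gives
\[
(\nabla v_t,\nabla w)=\tfrac12\tfrac d{dt}\|\nabla v\|^2+\tau\|\nabla v_t\|^2,
\]
so it is better to include $\|\nabla v\|^2$-type integrals in $M$ and accept a residual $\tau(b-\tau)\|\nabla v_t\|^2$. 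Computing $M''$ with the equations yields
\[
M''=2\|u_t\|^2+2\|w_t\|^2-2\mathcal I_\alpha-2(b-\tau)\tau\|\nabla v_t\|^2+2\beta .
\]
We eliminate $-\mathcal I_\alpha$ using the energy identity together with \eqref{eq:algebraic}: $-\int|u|^\gamma\ln|u|$ is expressed through $\gamma\int F$ and $\|u\|_\gamma^\gamma$, and $\int F$ through $\mathcal E(t)$. This is the ``exact-energy reconstruction.'' It produces $(\gamma+2)(\|u_t\|^2+\|w_t\|^2)$, the positive term $\gamma\tau(b-\tau)\|\nabla v_t\|^2$, which absorbs the residual $-2\tau(b-\tau)\|\nabla v_t\|^2$ because $\gamma>2$, and $(\gamma-2)Q_\alpha+2\gamma\int_0^t(\text{dissipation})-2\gamma\mathcal E(0)$. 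The remaining deficit is handled by splitting. One part of $-\mathcal I_\alpha$ is kept and bounded by Step 2, giving $(\gamma-2)Q_\alpha\ge(\gamma-2)\rho_0$ via Step 1. The other part uses $\mathcal E(0)<d_\alpha\le\frac{\gamma-2}{2\gamma}Q_\alpha+\dots$, which controls $2\gamma\mathcal E(0)$ strictly, with the small $\beta$ absorbed by the margin $\delta$.

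\textbf{Step 4 (closing).} By Cauchy--Schwarz,
\[
(M')^2\le 4M\Bigl(\|u_t\|^2+\|w_t\|^2+\int_0^t(\text{dissipation})+\beta\Bigr)
\]
after rewriting the time integrals via $\tfrac{d}{dt}$ of norms (the standard Levine trick). This gives $MM''-\frac{\gamma+2}{4}(M')^2\ge0$, so $M^{-(\gamma-2)/4}$ is concave and decreasing once $t_1$ is large enough that $M'(0)>0$. Hence it vanishes in finite time, contradicting $T_{\max}=\infty$. Proposition~\ref{prop:local} then gives blow-up of $\|Y\|_{\mathcal H}$; since the energy identity bounds the kinetic parts by $\mathcal E(0)+\int F(u)$, which is controlled by $\|\nabla u\|_2$, this upgrades to blow-up of $\|\nabla u\|^2+\|\nabla w\|^2$.

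\textbf{Main obstacle.} The hardest part is Step 3: the bookkeeping that matches the MGT cross term, the added integrals in $M$, and the Cauchy--Schwarz structure, so that the residual is truly absorbed by the $\gamma\tau(b-\tau)\|\nabla v_t\|^2$ coefficient.
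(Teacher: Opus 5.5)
Your outline follows the paper's argument: invariance of $\mathcal U_\alpha$ by a first-contact argument, a Levine functional containing $\int_0^t(\|\nabla u\|_2^2+(b-\tau)\|\nabla v\|_2^2)\,ds$, the residual $-2\tau(b-\tau)\|\nabla v_t\|_2^2$, full reconstruction of $\int_\Omega|u|^\gamma\ln|u|\,dx$ from $\mathcal E(t)$, $\eta_0=(\gamma-2)/4$, and the continuation principle for the gradient blow-up. Your Step~1 shortcut is a valid replacement for the paper's case split: $\mathcal I_\alpha<0$ forces $Q_\alpha>\rho_0$ by Lemma~\ref{lem:local_positivity}. Your Step~2 inequality is, by \eqref{eq:algebraic}, equivalent to the paper's bound $(\gamma-2)Q_\alpha+\frac2\gamma\|u\|_\gamma^\gamma>2\gamma d_\alpha$. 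The quickest proof is that $\lambda\mapsto\frac{\gamma-2}{2\gamma}\lambda^2Q_\alpha+\frac1{\gamma^2}\lambda^\gamma\|u\|_\gamma^\gamma$ is increasing, and at $\lambda^*\in(0,1)$ it equals $\mathcal J_\alpha(\lambda^*u,\lambda^*w)\ge d_\alpha$.

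\textbf{Gap in Step~4.} The Cauchy--Schwarz bound fails for your $M$. Substituting $\|\nabla u(t)\|_2^2=\|\nabla u_0\|_2^2+2\int_0^t(\nabla u,\nabla u_t)\,ds$, and the analogous identity for $v$, leaves the constant $A:=\|\nabla u_0\|_2^2+(b-\tau)\|\nabla v_0\|_2^2$ in $M'$. Let $X_2$ denote the bracket in your Step~4. Then Cauchy--Schwarz yields only $(M'-A)^2\le 4MX_2$, not $(M')^2\le 4MX_2$.

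\textbf{The paper's fix.} The paper adds $(T-t)A$ to the functional. This leaves $M''$ unchanged and cancels $A$ in $M'$. The price is that $M(0)=B+TA$, with $B=\|u_0\|_2^2+\|w_0\|_2^2+\beta t_1^2$, now grows with $T$. So the endgame is not merely ``choose $t_1$ with $M'(0)>0$''. You must take $t_1$ so large that $\eta_0M'(0)>A$, and then $T>B/(\eta_0M'(0)-A)$, which gives $M(0)-\eta_0M'(0)T<0$.

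\textbf{The closing step.} No splitting and no $\rho_0$ are needed. You must, however, keep the term $\frac2\gamma\|u\|_\gamma^\gamma$ produced by the reconstruction. It is missing from your list, and without it the Nehari comparison cannot close. With it, $M''-(\gamma+2)X_2\ge(\gamma-2)Q_\alpha+\frac2\gamma\|u\|_\gamma^\gamma-2\gamma\mathcal E(0)-\gamma\beta>2\gamma(d_\alpha-\mathcal E(0))-\gamma\beta$, and the choice $\beta=2(d_\alpha-\mathcal E(0))$ finishes the argument.
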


\section{Local Well-Posedness and Strong Continuity}

\subsection{Closed A Priori Estimates and Local Existence}

To establish local existence, we employ the Faedo--Galerkin method. Let $\{w_j\}_{j=1}^\infty$ be a complete basis of eigenfunctions of $-\Delta$ in $H_0^1(\Omega)$, and define
\begin{equation*}
V_m:=\operatorname{span}\{w_1,\dots,w_m\}.
\end{equation*}
We seek approximate solutions of the form
\begin{equation*}
u^m(t)=\sum_{j=1}^m g_{jm}(t)w_j,\qquad
w^m(t)=\sum_{j=1}^m h_{jm}(t)w_j,\qquad
v^m(t)=\sum_{j=1}^m l_{jm}(t)w_j,
\end{equation*}
where the coefficient functions satisfy the finite-dimensional Galerkin system obtained by projecting the augmented system onto $V_m$. The Galerkin initial data are chosen by spectral projection onto $V_m$, namely,
\begin{equation*}
u_0^m=P_m u_0,\qquad u_1^m=P_m u_1,\qquad
w_0^m=P_m w_0,\qquad w_1^m=P_m w_1,\qquad
v_1^m=P_m v_1,
\end{equation*}
where $P_m$ denotes the orthogonal projection onto $V_m$.

Our goal is to derive an $m$-independent closed a priori estimate for the projected system. To this end, rather than working directly with the full energy, whose potential part does not have a definite sign, we introduce the exact quadratic structural energy
\begin{equation*}
\widetilde{\mathcal{E}}_m(t):=\frac{1}{2}\|u_t^m\|_2^2+\frac{1}{2}\|w_t^m\|_2^2
+\frac{\tau(b-\tau)}{2}\|\nabla v_t^m\|_2^2+\frac{1}{2} Q_\alpha(u^m,w^m).
\end{equation*}
Since $P_m$ is bounded in both $L^2(\Omega)$ and $H_0^1(\Omega)$, there exists a constant $M_0>0$, independent of $m$, such that
\begin{equation*}
\widetilde{\mathcal E}_m(0)\le M_0.
\end{equation*}
Testing the approximate equations with $u_t^m$ and $w_t^m$, respectively, we obtain
\begin{equation}\label{eq:exact_energy}
\frac{d}{dt}\widetilde{\mathcal{E}}_m(t)+\|\nabla u_t^m\|_2^2+(b-\tau)\|\nabla v_t^m\|_2^2
=\int_\Omega f(u^m)u_t^m\,dx.
\end{equation}
Thus, the dissipative structure is preserved at the Galerkin level, and the main issue reduces to estimating the nonlinear source term on the right-hand side.

Since $\gamma<2^*$, the logarithmic source satisfies the subcritical polynomial bound
\begin{equation*}
|f(s)|\le C\bigl(1+|s|^{2^*-1}\bigr).
\end{equation*}
Applying the generalized H\"older inequality together with the Sobolev embedding $H_0^1(\Omega)\hookrightarrow L^{2^*}(\Omega)$, we estimate
\begin{equation}\label{eq:bound f_1}
\int_\Omega |f(u^m)||u_t^m|\,dx
\le C\int_\Omega \bigl(1+|u^m|^{2^*-1}\bigr)|u_t^m|\,dx
\le C_0\bigl(1+\|\nabla u^m\|_2^{\,2^*-1}\bigr)\|\nabla u_t^m\|_2.
\end{equation}
By Young's inequality,
\begin{equation}\label{eq:bound f_2}
C_0\bigl(1+\|\nabla u^m\|_2^{\,2^*-1}\bigr)\|\nabla u_t^m\|_2
\le \frac{1}{2}\|\nabla u_t^m\|_2^2
+\frac{C_0^2}{2}\bigl(1+\|\nabla u^m\|_2^{\,2^*-1}\bigr)^2.
\end{equation}

Next, by the coercivity of $Q_\alpha$, we have
\begin{equation*}
Q_\alpha(u^m,w^m)\ge c_\alpha \|\nabla u^m\|_2^2,
\end{equation*}
and hence
\begin{equation*}
\|\nabla u^m\|_2^2\le \frac{2}{c_\alpha}\widetilde{\mathcal{E}}_m(t).
\end{equation*}
Therefore,
\begin{equation}\label{eq:bound f_3}
\bigl(1+\|\nabla u^m\|_2^{\,2^*-1}\bigr)^2
\le 2+2\|\nabla u^m\|_2^{\,2(2^*-1)}
\le \widetilde{C}_1\bigl(1+(\widetilde{\mathcal{E}}_m(t))^{2^*-1}\bigr).
\end{equation}
Substituting \eqref{eq:bound f_1}--\eqref{eq:bound f_3} into \eqref{eq:exact_energy}, we arrive at the closed nonlinear differential inequality
\begin{equation*}
\frac{d}{dt}\widetilde{\mathcal{E}}_m(t)\le \widetilde{C}_2\bigl(1+(\widetilde{\mathcal{E}}_m(t))^{2^*-1}\bigr).
\end{equation*}
Because $2^*>2$, we have $2^*-1>1$, and the right-hand side depends only on $\widetilde{\mathcal{E}}_m(t)$ itself. Since the initial energy satisfies $\widetilde{\mathcal{E}}_m(0)\le M_0$ uniformly in $m$, standard ODE comparison yields a time $T_0>0$, depending only on $M_0$ and $\widetilde{C}_2$ but independent of $m$, such that
\begin{equation*}
\sup_{t\in[0,T_0]}\widetilde{\mathcal{E}}_m(t)\le 2M_0.
\end{equation*}
Consequently, the Galerkin solutions are uniformly bounded in
\begin{equation*}
u^m,w^m\quad \text{in }L^\infty(0,T_0;H_0^1(\Omega)),
\qquad
u_t^m,w_t^m\quad \text{in }L^\infty(0,T_0;L^2(\Omega)),
\end{equation*}
with the corresponding control of $\nabla v_t^m$ inherited from the definition of $\widetilde{\mathcal{E}}_m$. These uniform local-in-time bounds provide the starting point for the compactness argument developed in the next subsection.

\subsection{Compactness via Domain Splitting}

We now pass to the limit in the Galerkin scheme. The main difficulty is the non-scale-invariant logarithmic source
\begin{equation*}
f(u^m)=|u^m|^{\gamma-2}u^m \ln |u^m|,
\end{equation*}
for which a direct global polynomial bound is not available in the full range $2<\gamma<2^*$. To treat the logarithmic source, we decompose the domain according to the size of $u^m$:
\begin{equation*}
\Omega=\Omega_1^m\cup\Omega_2^m,
\qquad
\Omega_1^m:=\{x \in \Omega \mid |u^m|\le 1\},
\quad
\Omega_2^m:=\{x \in \Omega \mid |u^m|>1\}.
\end{equation*}

On $\Omega_1^m$, the function $r\mapsto |r|^{\gamma-1}|\ln |r||$ is bounded on $[0,1]$, and hence $f(u^m)$ is uniformly bounded. On $\Omega_2^m$, we use the elementary estimate
\begin{equation*}
\ln |u^m|\le C_\varepsilon |u^m|^\varepsilon
\qquad \text{for } |u^m|>1,
\end{equation*}
valid for any sufficiently small $\varepsilon>0$. Consequently,
\begin{equation*}
|f(u^m)|\le C_\varepsilon |u^m|^{\gamma-1+\varepsilon}
\qquad \text{on } \Omega_2^m.
\end{equation*}

We choose $\varepsilon$ so that
\begin{equation*}
0<\varepsilon\le \varepsilon_*:=\frac{\gamma-1}{\gamma}(2^*-\gamma).
\end{equation*}
Since $\gamma<2^*$, we have $\varepsilon_*>0$. Moreover, this choice ensures
\begin{equation*}
(\gamma-1+\varepsilon)\frac{\gamma}{\gamma-1}
=
\gamma+\varepsilon\frac{\gamma}{\gamma-1}
\le 2^*.
\end{equation*}
Let
\begin{equation*}
q:=\frac{\gamma}{\gamma-1}>1.
\end{equation*}
Then, combining the above splitting with the uniform bound $u^m$ in $L^\infty(0,T_0;H_0^1(\Omega))$ obtained in Subsection 3.1 and the Sobolev embedding $H_0^1(\Omega)\hookrightarrow L^{2^*}(\Omega)$, we conclude that $f(u^m)$ is uniformly bounded in $L^\infty(0,T_0;L^q(\Omega))$. Since $q>1$, the family $\{f(u^m)\}$ is uniformly integrable on $\Omega\times(0,T_0)$.

On the other hand, the Galerkin bounds and the equation yield the compactness required for the state variable. By the Aubin--Lions--Simon lemma, after extracting a subsequence, we have
\begin{equation*}
u^m \to u \qquad \text{strongly in } C([0,T_0];L^p(\Omega))
\quad \text{for every } 2\le p<2^*,
\end{equation*}
and in particular almost everywhere on $\Omega\times(0,T_0)$.

Since $f$ is continuous, the almost everywhere convergence $u^m\to u$ implies $f(u^m)\to f(u)$ almost everywhere. Together with the uniform integrability established above, Vitali's convergence theorem yields
\begin{equation*}
f(u^m)\to f(u)\qquad \text{strongly in } L^1(0,T_0;L^1(\Omega)).
\end{equation*}
Since the conjugate exponent of $q$ is $q'=\gamma<2^*$, the Sobolev embedding
\begin{equation*}
H_0^1(\Omega)\hookrightarrow L^\gamma(\Omega)
\end{equation*}
implies the continuous embedding
\begin{equation*}
L^q(\Omega)\hookrightarrow H^{-1}(\Omega).
\end{equation*}
Hence the uniform $L^\infty(0,T_0;L^q(\Omega))$ bound established above shows that $\{f(u^m)\}$ is bounded in $L^\infty(0,T_0;H^{-1}(\Omega))$. By weak-$*$ compactness, after passing to a further subsequence,
\begin{equation*}
f(u^m)\rightharpoonup^*\chi
\qquad\text{in }L^\infty(0,T_0;H^{-1}(\Omega))
\end{equation*}
for some $\chi\in L^\infty(0,T_0;H^{-1}(\Omega))$. The strong $L^1(0,T_0;L^1(\Omega))$ convergence obtained above identifies $\chi=f(u)$ in the sense of distributions. Consequently,
\begin{equation*}
f(u^m)\rightharpoonup^*f(u)
\qquad\text{in }L^\infty(0,T_0;H^{-1}(\Omega)).
\end{equation*}
In particular, for every
$\Phi\in L^1(0,T_0;H_0^1(\Omega))$,
\begin{equation*}
\int_0^{T_0}
\langle f(u^m(t)),\Phi(t)\rangle_{H^{-1},H_0^1}\,dt
\longrightarrow
\int_0^{T_0}
\langle f(u(t)),\Phi(t)\rangle_{H^{-1},H_0^1}\,dt.
\end{equation*}
This justifies the passage to the limit in the nonlinear term of the time-integrated weak formulation. Therefore, the Galerkin sequence converges, up to a subsequence, to a local weak solution on $[0,T_0]$.

\subsection{Uniqueness via Strong Damping}

To prove uniqueness, let $(u^1,w^1,v^1)$ and $(u^2,w^2,v^2)$ be two local weak solutions corresponding to the same initial data. Define the difference variables
\begin{equation*}
z:=u^1-u^2,\qquad y:=w^1-w^2,\qquad r:=v^1-v^2,
\end{equation*}
and introduce the associated difference energy
\begin{equation*}
\mathcal Z(t):=\|z_t\|_2^2+\|y_t\|_2^2+\tau(b-\tau)\|\nabla r_t\|_2^2+Q_\alpha(z,y).
\end{equation*}
Since the weak-solution regularity does not justify direct testing of the second difference equation by $y_t$, we regularize the difference system by Steklov averaging in time. Testing the averaged equations by $(z^h)_t$ and $(y^h)_t$, respectively, integrating over $(s,t)$, and passing to the limit as $h\to0$, we obtain
\begin{align*}
\mathcal Z(t)-\mathcal Z(s)
&+2\int_s^t
\Bigl(
\|\nabla z_\xi(\xi)\|_2^2
+(b-\tau)\|\nabla r_\xi(\xi)\|_2^2
\Bigr)\,d\xi  \\
&=
2\int_s^t
\int_\Omega
\bigl(f(u^1(\xi))-f(u^2(\xi))\bigr)z_\xi(\xi)\,dx\,d\xi. 
\end{align*}
Equivalently, the differential relation holds for a.e. $t\in(0,T)$:
\begin{equation}\label{eq:diff_energy}
\frac{1}{2}\frac{d}{dt}\mathcal Z(t)+\|\nabla z_t\|_2^2+(b-\tau)\|\nabla r_t\|_2^2
=
\int_\Omega \bigl(f(u^1)-f(u^2)\bigr)z_t\,dx.
\end{equation}

The main difficulty in estimating the right-hand side lies in the logarithmic growth of the derivative of $f$. More precisely, for $s\neq 0$,
\begin{equation*}
f'(s)=|s|^{\gamma-2}\bigl((\gamma-1)\ln|s|+1\bigr),
\end{equation*}
so $|f'(s)|$ cannot be controlled merely by $|s|^{\gamma-2}$ at infinity. However, for any $\varepsilon>0$, there exists $C_\varepsilon>0$ such that
\begin{equation*}
|f'(s)|\le C_\varepsilon\bigl(1+|s|^{\gamma-2+\varepsilon}\bigr)
\qquad\text{for all }s\in\mathbb R.
\end{equation*}
Hence, by the Mean Value Theorem,
\begin{equation*}
|f(u^1)-f(u^2)|
\le
C_\varepsilon\bigl(1+|u^1|^{\gamma-2+\varepsilon}+|u^2|^{\gamma-2+\varepsilon}\bigr)|z|.
\end{equation*}

We split the right-hand side of \eqref{eq:diff_energy} into a linear-growth part and a higher-order nonlinear part. The constant term is controlled by the Cauchy--Schwarz and Poincar\'e inequalities:
\begin{equation}\label{eq:diff_linear}
C_\varepsilon\int_\Omega |z||z_t|\,dx
\le
C\|z\|_2\|z_t\|_2
\le
C\|\nabla z\|_2\|z_t\|_2
\le
C\mathcal Z(t).
\end{equation}

For the nonlinear part, in order to cover the full subcritical range
$\gamma<2^*$, we choose
\begin{equation*}
\varepsilon=2^*-\gamma>0,
\end{equation*}
so that
\begin{equation*}
\gamma-2+\varepsilon=2^*-2=\frac{4}{n-2}.
\end{equation*}
Applying the generalized H\"older inequality with exponents $\frac{n}{2}$, $\frac{2n}{n-2}$, $\frac{2n}{n-2}$, together with the Sobolev embedding $H_0^1(\Omega)\hookrightarrow L^{2^*}(\Omega)$, we obtain, for $i=1,2$,
\begin{equation}\label{eq:diff_cross}
\int_\Omega |u^i|^{\frac{4}{n-2}}|z||z_t|\,dx
\le
\|u^i\|_{2^*}^{\frac{4}{n-2}}\|z\|_{2^*}\|z_t\|_{2^*}
\le
C_M\|\nabla z\|_2\|\nabla z_t\|_2.
\end{equation}
The estimate can be closed over the full subcritical range precisely because of the strong damping regularity $\nabla z_t\in L^2(\Omega)$. Applying Young's inequality, we find
\begin{equation}\label{eq:diff_young}
C_M\|\nabla z\|_2\|\nabla z_t\|_2
\le
C\mathcal Z(t)+\frac12\|\nabla z_t\|_2^2.
\end{equation}
Substituting \eqref{eq:diff_linear}--\eqref{eq:diff_young} into \eqref{eq:diff_energy}, we obtain
\begin{equation*}
\frac{d}{dt}\mathcal Z(t)\le C_0\mathcal Z(t).
\end{equation*}
Since $\mathcal Z(0)=0$, Gronwall's lemma implies that $\mathcal Z(t)=0$ for all $t\in[0,T]$. Therefore,
\begin{equation*}
z\equiv 0,\qquad y\equiv 0,\qquad r\equiv 0,
\end{equation*}
which proves uniqueness of the local weak solution.

\subsection{Continuation Principle}

We now prove the continuation criterion stated in Proposition~\ref{prop:local}. The key observation is that the length of the local existence interval produced by the Faedo--Galerkin construction depends only on the size of the initial data in the phase space $\mathcal H_0$.

Suppose, for contradiction, that the maximal existence time satisfies
\begin{equation*}
T_{\max}<\infty,
\end{equation*}
while
\begin{equation*}
\limsup_{t\uparrow T_{\max}}\|Y(t)\|_{\mathcal H}\le M<\infty.
\end{equation*}
Then the augmented state
\begin{equation*}
Y(t)=(u(t),u_t(t),w(t),w_t(t),v_t(t))
\end{equation*}
remains uniformly bounded in $\mathcal H$ on $[0,T_{\max})$. Hence, we may choose a sequence $t_n\uparrow T_{\max}$ such that
\begin{equation*}
Y(t_n)\in \mathcal H
\qquad\text{for every }n,
\end{equation*}
with
\begin{equation*}
\sup_n \|Y(t_n)\|_{\mathcal H}\le C_M.
\end{equation*}

To restart the local theory, we reconstruct the corresponding variables at time $t_n$ by
\begin{equation*}
v(t_n):=w(t_n)-\tau v_t(t_n),\qquad
v_{tt}(t_n):=\tau^{-1}\bigl(w_t(t_n)-v_t(t_n)\bigr).
\end{equation*}
Therefore, the state $\bigl(u(t_n),u_t(t_n),v(t_n),v_t(t_n),v_{tt}(t_n)\bigr)$ belongs to $\mathcal H_0$.

Moreover, its $\mathcal H_0$-norm is uniformly bounded independently of $n$. Indeed, by the Poincar\'e inequality,
\begin{equation*}
\|\nabla v(t_n)\|_2
\le
\|\nabla w(t_n)\|_2+\tau\|\nabla v_t(t_n)\|_2
\le C_M,
\end{equation*}
and
\begin{equation*}
\|v_{tt}(t_n)\|_2
\le
\tau^{-1}\Bigl(\|w_t(t_n)\|_2+\|v_t(t_n)\|_2\Bigr)
\le
\tau^{-1}\Bigl(\|w_t(t_n)\|_2+C_P\|\nabla v_t(t_n)\|_2\Bigr)
\le C_M.
\end{equation*}
Thus, the reconstructed data are uniformly bounded in $\mathcal H_0$.

Since the local well-posedness theory depends only on the size of the initial data in $\mathcal H_0$, it can be restarted at each time $t_n$. Consequently, there exists a time $\delta(M)>0$, independent of $n$, such that the solution extends to the interval
\begin{equation*}
[t_n,t_n+\delta(M)].
\end{equation*}
For all sufficiently large $n$, we have
\begin{equation*}
t_n+\delta(M)>T_{\max},
\end{equation*}
which contradicts the maximality of $T_{\max}$.

Therefore, the assumption
\begin{equation*}
T_{\max}<\infty
\quad\text{and}\quad
\limsup_{t\uparrow T_{\max}}\|Y(t)\|_{\mathcal H}<\infty
\end{equation*}
is impossible. Hence,
\begin{equation*}
T_{\max}<\infty
\quad\Longrightarrow\quad
\limsup_{t\uparrow T_{\max}}\|Y(t)\|_{\mathcal H}=\infty.
\end{equation*}
This proves the continuation principle.

\subsection{Exact Energy Identity and Strong Continuity} \label{subsec:strong_continuity}

We next justify the exact energy identity for weak solutions and upgrade the weak continuity of the state to strong continuity in the augmented phase space. To regularize the time variable, we employ Steklov averages
\begin{equation*}
u^h(t):=\frac1h\int_t^{t+h}u(s)\,ds,
\qquad
w^h(t):=\frac1h\int_t^{t+h}w(s)\,ds.
\end{equation*}
Testing the time-averaged equations with $(u^h)_t$ and $(w^h)_t$, respectively, and then passing to the limit as $h\to 0$, we recover the exact energy identity for every $0\le s\le t<T_{\max}$:
\begin{equation}\label{eq:exact_energy_identity_limit}
\mathcal{E}(t)+\int_s^t\Bigl(\|\nabla u_t(\sigma)\|_2^2+(b-\tau)\|\nabla v_t(\sigma)\|_2^2\Bigr)\,d\sigma
=\mathcal{E}(s).
\end{equation}
In particular, the map $t\mapsto \mathcal{E}(t)$ is continuous on $[0,T_{\max})$.

To prove strong continuity in $\mathcal H$, we introduce the equivalent Hilbert norm
\begin{equation*}
\|Y(t)\|_*^2
:=
\|u_t(t)\|_2^2+\|w_t(t)\|_2^2+\tau(b-\tau)\|\nabla v_t(t)\|_2^2+Q_\alpha(u(t),w(t)).
\end{equation*}
By the definition of the exact energy, this norm satisfies the algebraic relation
\begin{equation*}
\|Y(t)\|_*^2
=
2\mathcal{E}(t)+2\int_\Omega F(u(t))\,dx.
\end{equation*}

Let $t_n\to t$ be any sequence. By the continuity of the exact energy established above,
\begin{equation*}
\mathcal{E}(t_n)\to \mathcal{E}(t).
\end{equation*}
On the other hand, the weak continuity
\begin{equation*}
Y\in C_w([0,T_{\max});\mathcal H)
\end{equation*}
implies
\begin{equation*}
u(t_n)\rightharpoonup u(t)\quad\text{weakly in }H_0^1(\Omega),
\qquad
u(t_n)\to u(t)\quad\text{strongly in }L^2(\Omega).
\end{equation*}
Since $\gamma<2^*$, we may choose
\begin{equation*}
0 < \eta < \min\{\gamma-2,\,2^*-\gamma\}.
\end{equation*}
For the logarithmic primitive
\begin{equation*}
F(s)=\frac1\gamma |s|^\gamma \ln|s|-\frac1{\gamma^2}|s|^\gamma,
\end{equation*}
we have the two-sided growth bound
\begin{equation*}
|F(s)|\le C_\eta\bigl(|s|^{\gamma-\eta}+|s|^{\gamma+\eta}\bigr)
\qquad\text{for all }s\in\mathbb R.
\end{equation*}
Because $u$ is bounded in $H_0^1(\Omega)$ and continuous in $L^2(\Omega)$, Sobolev
interpolation yields
\begin{equation*}
u\in C([0,T_{\max});L^{\gamma-\eta}(\Omega))
\cap
C([0,T_{\max});L^{\gamma+\eta}(\Omega)).
\end{equation*}
In particular, $u(t_n)\to u(t)$ in measure on $\Omega$.

Moreover, the sequence $\{u(t_n)\}$ is uniformly bounded in $H_0^1(\Omega)$, hence also in $L^{2^*}(\Omega)$ by the Sobolev embedding $H_0^1(\Omega)\hookrightarrow L^{2^*}(\Omega)$. Since $\gamma+\eta<2^*$, one may choose $p>1$ such that
\begin{equation*}
p(\gamma+\eta)\le 2^*.
\end{equation*}
The above growth estimate for $F$ then shows that $\{F(u(t_n))\}$ is uniformly bounded in $L^p(\Omega)$. Because $p>1$, this implies that $\{F(u(t_n))\}$ is uniformly integrable over $\Omega$. Combined with the almost everywhere convergence of a subsequence, Vitali's convergence theorem yields
\begin{equation*}
F(u(t_n))\to F(u(t))
\qquad\text{strongly in }L^1(\Omega).
\end{equation*}
Therefore,
\begin{equation*}
\int_\Omega F(u(t_n))\,dx \to \int_\Omega F(u(t))\,dx,
\end{equation*}
so the map
\begin{equation*}
t\mapsto \int_\Omega F(u(t))\,dx
\end{equation*}
is continuous on $[0,T_{\max})$. Consequently, the identity
\begin{equation*}
\|Y(t)\|_*^2
=
2\mathcal{E}(t)+2\int_\Omega F(u(t))\,dx
\end{equation*}
shows that $t\mapsto \|Y(t)\|_*^2$ is continuous. Since
\begin{equation*}
Y\in C_w([0,T_{\max});\mathcal H),
\end{equation*}
weak continuity together with norm continuity in the Hilbert space $\mathcal H$ implies
\begin{equation*}
Y\in C([0,T_{\max});\mathcal H).
\end{equation*}
In particular, the quadratic form
\begin{equation*}
t\mapsto Q_\alpha(u(t),w(t))
\end{equation*}
is continuous. Finally, using the algebraic identity
\begin{equation*}
|u|^\gamma\ln|u|
=
\gamma F(u)+\frac1\gamma |u|^\gamma,
\end{equation*}
we conclude that
\begin{equation*}
t\mapsto \mathcal{I}_\alpha(u(t),w(t))
\end{equation*}
is also continuous on $[0,T_{\max})$.

\section{Proof of Stabilization in the Stable Set}

\subsection{Invariance of the Stable Set}

We first show that the stable set is positively invariant along the flow. This fact will allow us to propagate the potential-well structure obtained in Section~2 throughout the entire lifespan of the solution.

\begin{proposition}[Invariance of the Stable Set]\label{prop:stable_invariance}
If $\mathcal{E}(0) < d_\alpha$ and $(u_0,w_0)\in \mathcal{W}_\alpha$, then the state $(u(t),w(t))$ remains in $\mathcal{W}_\alpha$ for all $t\in[0,T_{\max})$.
\end{proposition}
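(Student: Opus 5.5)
The plan is the standard first-contact argument, using the energy identity \eqref{eq:exact_energy_identity_limit} and the continuity results of Subsection~\ref{subsec:strong_continuity}.

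\textbf{Step 1: the energy condition persists.} Since every term in the kinetic part of $\mathcal E$ is nonnegative, we have
\begin{equation*}
\mathcal J_\alpha(u(t),w(t))\le\mathcal E(t).
\end{equation*}
The exact energy identity gives $\mathcal E(t)\le\mathcal E(0)$. Hence
\begin{equation*}
\mathcal J_\alpha(u(t),w(t))\le\mathcal E(0)<d_\alpha\qquad\text{for all }t\in[0,T_{\max}).
\end{equation*}
So the only way to leave $\mathcal W_\alpha$ is through the sign of $\mathcal I_\alpha$.

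\textbf{Step 2: set up the first exit time.} Argue by contradiction and define
\begin{equation*}
t^*:=\inf\{t\in[0,T_{\max}) : (u(t),w(t))\notin\mathcal W_\alpha\}.
\end{equation*}
The maps $t\mapsto Q_\alpha(u(t),w(t))$ and $t\mapsto\mathcal I_\alpha(u(t),w(t))$ are continuous, as shown in Subsection~\ref{subsec:strong_continuity}. This continuity lets us treat the exit point on the boundary of $\mathcal W_\alpha$.

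\textbf{Step 3: the exit state.} We expect the main obstacle here, because $\mathcal W_\alpha$ contains the isolated point $(0,0)$. I would first use Lemma~\ref{lem:local_positivity}, which says $\mathcal I_\alpha\ge\frac14Q_\alpha$ whenever $Q_\alpha\le\rho_0$. Then no state with $0<Q_\alpha\le\rho_0$ can have $\mathcal I_\alpha\le0$. So the trajectory can leave through the origin only by passing continuously from the origin to the region $\mathcal I_\alpha>0$, which stays inside $\mathcal W_\alpha$. By continuity, $\mathcal I_\alpha(u(t^*),w(t^*))=0$, and there are times $t_k\downarrow t^*$ (or $t_k=t^*$) with $\mathcal I_\alpha(u(t_k),w(t_k))<0$. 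This gives two cases.
\begin{itemize}
\item If $(u(t^*),w(t^*))\ne(0,0)$, then it lies on $\mathcal N_\alpha$. The definition of $d_\alpha$ gives $\mathcal J_\alpha(u(t^*),w(t^*))\ge d_\alpha$, contradicting Step 1.
\item If $(u(t^*),w(t^*))=(0,0)$, then $Q_\alpha(u(t_k),w(t_k))\to0$. For large $k$ this gives $Q_\alpha\le\rho_0$, so Lemma~\ref{lem:local_positivity} yields $\mathcal I_\alpha(u(t_k),w(t_k))\ge0$, contradicting $\mathcal I_\alpha(u(t_k),w(t_k))<0$.
\end{itemize}

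\textbf{Step 4: justify via the cone of positive $\mathcal I_\alpha$.} To make Step 3 rigorous, I would show that the set $\{\mathcal I_\alpha>0\}\cup\{0\}$ and the set $\{\mathcal I_\alpha<0\}$ are separated along any continuous path below level $d_\alpha$. Such a path cannot reach $\mathcal I_\alpha<0$ without passing through a point with $\mathcal I_\alpha=0$ and $Q_\alpha>0$. The reason is that $Q_\alpha\ge\kappa_0$ on $\{\mathcal I_\alpha\le 0\}\setminus\{0\}$ (argued as in Lemma~\ref{lem:depth}), while $Q_\alpha\to 0$ near the origin. The intermediate value theorem applied to $\mathcal I_\alpha$ on the path then produces a point of $\mathcal N_\alpha$ along the trajectory with $\mathcal J_\alpha<d_\alpha$, which is impossible.
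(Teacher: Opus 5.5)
Your proposal is correct and follows essentially the same first-contact argument as the paper. The energy bound keeps $\mathcal J_\alpha<d_\alpha$, an exit through $\mathcal I_\alpha=0$ away from the origin lands on $\mathcal N_\alpha$, and an exit through $(0,0)$ is excluded by Lemma~\ref{lem:local_positivity}. Two details are worth noting. Your two-sided continuity argument giving $\mathcal I_\alpha(u(t^*),w(t^*))=0$ is somewhat more careful than the paper's. Your Step 4 observation, that $Q_\alpha\ge\kappa_0$ on $\{\mathcal I_\alpha\le 0\}\setminus\{(0,0)\}$ by the argument of Lemma~\ref{lem:depth}, is an equally valid alternative way to rule out the origin case.
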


\begin{proof}
Suppose, for contradiction, that there exists a first time $t_*\in(0,T_{\max})$ at which the trajectory exits the stable set $\mathcal{W}_\alpha$. Since $Y\in C([0,T_{\max});\mathcal{H})$ by Subsection~\ref{subsec:strong_continuity}, the mappings
\begin{equation*}
t\mapsto \mathcal{I}_\alpha(u(t),w(t)),
\qquad
t\mapsto \mathcal{J}_\alpha(u(t),w(t))
\end{equation*}
are continuous on $[0,T_{\max})$. Therefore, at the boundary time $t_*$, one must have either
\begin{equation*}
\mathcal{J}_\alpha(u(t_*),w(t_*))=d_\alpha,
\end{equation*}
or
\begin{equation*}
\mathcal{I}_\alpha(u(t_*),w(t_*))=0
\quad\text{with}\quad
(u(t_*),w(t_*))\neq(0,0).
\end{equation*}

The first alternative is impossible. Indeed, by the exact energy identity, $\mathcal{E}(t)$ is non-increasing, and by definition $\mathcal{J}_\alpha(u,w)\le \mathcal{E}(t)$. Hence,
\begin{equation*}
\mathcal{J}_\alpha(u(t_*),w(t_*))
\le \mathcal{E}(t_*)
\le \mathcal{E}(0)
< d_\alpha.
\end{equation*}
This contradicts $\mathcal{J}_\alpha(u(t_*),w(t_*))=d_\alpha$.

Consider next the second alternative. If $\mathcal{I}_\alpha(u(t_*),w(t_*))=0$ and $(u(t_*),w(t_*))\neq(0,0)$, then $(u(t_*),w(t_*))\in \mathcal{N}_\alpha$. By the definition of the well depth, this implies
\begin{equation*}
\mathcal{J}_\alpha(u(t_*),w(t_*))\ge d_\alpha.
\end{equation*}
On the other hand, the same energy bound as above gives
\begin{equation*}
\mathcal{J}_\alpha(u(t_*),w(t_*))
\le \mathcal{E}(t_*)
\le \mathcal{E}(0)
<d_\alpha,
\end{equation*}
which is again a contradiction.

It remains to exclude the possibility that the trajectory leaves $\mathcal{W}_\alpha$ through the origin. If $(u(t_*),w(t_*))=(0,0)$, then by definition $(u(t_*),w(t_*))\in \mathcal{W}_\alpha$. Moreover, by continuity of $Q_\alpha(u,w)$, any sufficiently small perturbation away from the origin satisfies
\begin{equation*}
Q_\alpha(u(t),w(t))\le \rho_0
\end{equation*}
for $t$ near $t_*$. Lemma~\ref{lem:local_positivity} then yields
\begin{equation*}
\mathcal{I}_\alpha(u(t),w(t))
\ge \frac14 Q_\alpha(u(t),w(t))
\ge 0,
\end{equation*}
so the trajectory cannot continuously cross from $\mathcal{W}_\alpha$ into its complement through $(0,0)$.

Therefore, no exit time can occur, and the trajectory remains in $\mathcal{W}_\alpha$ for all $t\in[0,T_{\max})$.
\end{proof}

\subsection{Global Existence and Uniform Exponential Decay}

We now prove Theorem~\ref{thm:main_stable}. Once the invariance of the stable set is known, Lemma~\ref{lem:gap} provides a uniform positive gap for $\mathcal{I}_\alpha$, which in turn yields a coercive lower bound for the exact energy. This already implies global existence via the continuation principle. The exponential decay is then obtained from a Lyapunov functional adapted to the coupled $(u,w,v)$-structure.

\begin{proof}[Proof of Theorem \ref{thm:main_stable}]
By Proposition~\ref{prop:stable_invariance}, the trajectory satisfies
\begin{equation*}
(u(t),w(t))\in \mathcal{W}_\alpha
\qquad\text{for all }t\in[0,T_{\max}).
\end{equation*}
Setting $E_0:=\mathcal{E}(0)$, Lemma~\ref{lem:gap} gives the uniform estimate
\begin{equation*}
\mathcal{I}_\alpha(u(t),w(t))\ge \delta_0 Q_\alpha(u(t),w(t)),
\qquad t\in[0,T_{\max}),
\end{equation*}
where
\begin{equation*}
\delta_0=1-\Bigl(\frac{E_0}{d_\alpha}\Bigr)^{(\gamma-2)/\gamma}\in(0,1).
\end{equation*}

Substituting this bound into the algebraic decomposition of $\mathcal{J}_\alpha$, we obtain
\begin{equation}\label{eq:stable_energy_lower}
\begin{aligned}
\mathcal{E}(t)
&\ge \mathcal{J}_\alpha(u,w) \\
&= \frac{\gamma-2}{2\gamma}Q_\alpha(u,w)
 + \frac{1}{\gamma}\mathcal{I}_\alpha(u,w)
 + \frac{1}{\gamma^2}\|u\|_\gamma^\gamma \\
&\ge
\left(
\frac{\gamma-2}{2\gamma}+\frac{\delta_0}{\gamma}
\right)
Q_\alpha(u,w).
\end{aligned}
\end{equation}
Using the coercivity estimate \eqref{eq:coercivity}, together with the positivity of the kinetic part of $\mathcal{E}(t)$, we deduce
\begin{equation}\label{eq:stable_coercive_bound}
\mathcal{E}(t)\ge C_{\min}\|Y(t)\|_{\mathcal H}^2
\qquad\text{for all } t\in[0,T_{\max}).
\end{equation}
Since the exact energy is non-increasing, $\mathcal{E}(t)\le \mathcal{E}(0)$, and therefore $\|Y(t)\|_{\mathcal H}$ remains uniformly bounded on $[0,T_{\max})$. The continuation principle in Proposition~\ref{prop:local} then implies that
\begin{equation*}
T_{\max}=\infty.
\end{equation*}
Thus the solution is global.

It remains to prove the uniform exponential decay of the exact energy. For this purpose, we introduce the Lyapunov functional
\begin{equation*}
\mathcal{L}(t):=N\mathcal{E}(t)+\Phi_{\mathrm{pot}}(t)+3\tau \Phi_{\mathrm{kin}}(t),
\end{equation*}
where
\begin{equation}\label{eq:lyapunov_components}
\Phi_{\mathrm{pot}}(t)
:=\int_\Omega (u_tu+w_tw)\,dx,
\qquad
\Phi_{\mathrm{kin}}(t)
:=\int_\Omega
\left(
-w_tv_t-\frac12|\nabla v|^2+\frac12|v_t|^2
\right)\,dx.
\end{equation}

By \eqref{eq:stable_coercive_bound}, we have $\|Y(t)\|_{\mathcal H}^2\le C\mathcal{E}(t)$. Hence, by the Cauchy--Schwarz and Poincar\'e inequalities,
\begin{equation*}
|\Phi_{\mathrm{pot}}(t)|+|3\tau\Phi_{\mathrm{kin}}(t)|
\le C_\Phi \mathcal{E}(t).
\end{equation*}
Choosing $N>C_\Phi$, we infer that $\mathcal{L}(t)$ is equivalent to $\mathcal{E}(t)$ in the sense that there exist constants $c_1,c_2>0$ such that 
\begin{equation}\label{eq:L_equiv_E} 
c_1\mathcal{E}(t)\le \mathcal{L}(t)\le c_2\mathcal{E}(t)
\qquad\text{for all } t\ge0.
\end{equation}

All differentiations below are first justified at the Galerkin level and then passed to the limit. We begin with $\Phi_{\mathrm{kin}}$. A direct computation gives
\begin{equation*}\label{eq:phikin_derivative_1}
\begin{aligned}
\Phi'_{\mathrm{kin}}(t)
&=
-\bigl(\Delta w+(b-\tau)\Delta v_t-\alpha u,\,v_t\bigr)
-(v_t+\tau v_{tt},v_{tt})
-(\nabla v,\nabla v_t)
+(v_t,v_{tt}) \\
&=
(\nabla w,\nabla v_t)
+(b-\tau)\|\nabla v_t\|_2^2
+\alpha(u,v_t)
-\tau\|v_{tt}\|_2^2
-(\nabla v,\nabla v_t).
\end{aligned}
\end{equation*}
Since $w=v+\tau v_t$, we have $\nabla w-\nabla v=\tau \nabla v_t$, and therefore
\begin{equation}\label{eq:phikin_derivative_2}
\Phi'_{\mathrm{kin}}(t)
=
b\|\nabla v_t\|_2^2
+\alpha(u,v_t)
-\tau\|v_{tt}\|_2^2.
\end{equation}

Next, differentiating $\Phi_{\mathrm{pot}}$ and using the augmented system
\eqref{eq:augmented}, we obtain
\begin{equation}\label{eq:phipot_derivative}
\begin{aligned}
\Phi'_{\mathrm{pot}}(t)
&=
\|u_t\|_2^2+\|w_t\|_2^2+(u,u_{tt})+(w,w_{tt}) \\
&=
\|u_t\|_2^2+\|w_t\|_2^2
-\|\nabla u\|_2^2-(\nabla u,\nabla u_t)-\alpha(w,u)
+\int_\Omega |u|^\gamma \ln|u|\,dx \\
&\quad
-\|\nabla w\|_2^2-(b-\tau)(\nabla w,\nabla v_t)-\alpha(u,w) \\
&=
\|u_t\|_2^2+\|w_t\|_2^2
-\mathcal{I}_\alpha(u,w)
-(\nabla u,\nabla u_t)
-(b-\tau)(\nabla w,\nabla v_t).
\end{aligned}
\end{equation}

Combining \eqref{eq:phikin_derivative_2} and \eqref{eq:phipot_derivative}, and using the invariant gap
\begin{equation*}
\mathcal{I}_\alpha(u,w)\ge \delta_0Q_\alpha(u,w)
\ge \delta_0 c_\alpha
\bigl(\|\nabla u\|_2^2+\|\nabla w\|_2^2\bigr),
\end{equation*}
we arrive at
\begin{equation}\label{eq:combined_auxiliary_derivative}
\begin{aligned}
\Phi'_{\mathrm{pot}}(t)+3\tau\Phi'_{\mathrm{kin}}(t)
&\le
\|u_t\|_2^2+\|w_t\|_2^2
-\delta_0 c_\alpha
\bigl(\|\nabla u\|_2^2+\|\nabla w\|_2^2\bigr) \\
&\quad
-(\nabla u,\nabla u_t)
-(b-\tau)(\nabla w,\nabla v_t)
+3\tau b\|\nabla v_t\|_2^2 \\
&\quad
+3\tau\alpha(u,v_t)
-3\tau^2\|v_{tt}\|_2^2.
\end{aligned}
\end{equation}

We now estimate the right-hand side term of \eqref{eq:combined_auxiliary_derivative}. First, using $w_t=v_t+\tau v_{tt}$ together with Poincar\'e's inequality, we obtain
\begin{equation}\label{eq:kinetic_velocity_bound}
\|u_t\|_2^2+\|w_t\|_2^2
\le
C_P^2\|\nabla u_t\|_2^2
+2C_P^2\|\nabla v_t\|_2^2
+2\tau^2\|v_{tt}\|_2^2.
\end{equation}
Next, Young's inequality with
\begin{equation*}
\varepsilon_1=\varepsilon_2=\frac{\delta_0 c_\alpha}{4},
\qquad
\varepsilon_3=\frac{\delta_0 c_\alpha}{8},
\end{equation*}
gives
\begin{align}
|(\nabla u,\nabla u_t)|
&\le
\varepsilon_1\|\nabla u\|_2^2
+\frac{1}{4\varepsilon_1}\|\nabla u_t\|_2^2,
\label{eq:young_uut}\\
|(b-\tau)(\nabla w,\nabla v_t)|
&\le
\varepsilon_2\|\nabla w\|_2^2
+\frac{(b-\tau)^2}{4\varepsilon_2}\|\nabla v_t\|_2^2,
\label{eq:young_wvt}\\
|3\tau\alpha(u,v_t)|
&\le
\varepsilon_3\|\nabla u\|_2^2
+\frac{9\tau^2\alpha^2C_P^4}{4\varepsilon_3}\|\nabla v_t\|_2^2.
\label{eq:young_coupling}
\end{align}

Substituting \eqref{eq:kinetic_velocity_bound}--\eqref{eq:young_coupling} into \eqref{eq:combined_auxiliary_derivative}, we see that the positive term $2\tau^2\|v_{tt}\|_2^2$ is absorbed by the negative contribution $-3\tau^2\|v_{tt}\|_2^2$, leaving $-\tau^2\|v_{tt}\|_2^2$. Moreover, the gradient terms retain a strictly negative coercive part. More precisely,
\begin{equation}\label{eq:auxiliary_derivative_estimate}
\Phi'_{\mathrm{pot}}(t)+3\tau\Phi'_{\mathrm{kin}}(t)
\le
C_1\|\nabla u_t\|_2^2
+
C_2\|\nabla v_t\|_2^2
-\tau^2\|v_{tt}\|_2^2
-\frac{\delta_0 c_\alpha}{2}
\bigl(\|\nabla u\|_2^2+\|\nabla w\|_2^2\bigr),
\end{equation}
where
\begin{equation*}
C_1:=C_P^2+\frac{1}{\delta_0 c_\alpha},
\qquad
C_2:=3\tau b+2C_P^2+\frac{(b-\tau)^2}{\delta_0 c_\alpha}
+\frac{18\tau^2\alpha^2C_P^4}{\delta_0 c_\alpha}.
\end{equation*}

On the other hand, the exact energy identity yields
\begin{equation}\label{eq:energy_dissipation_stable}
\mathcal{E}'(t)
=
-\|\nabla u_t\|_2^2-(b-\tau)\|\nabla v_t\|_2^2.
\end{equation}
Therefore, differentiating $\mathcal{L}(t)$ and combining \eqref{eq:auxiliary_derivative_estimate} with \eqref{eq:energy_dissipation_stable}, we obtain
\begin{equation}\label{eq:L_derivative_preliminary}
\begin{aligned}
\mathcal{L}'(t)
&\le
-\bigl(N-C_1\bigr)\|\nabla u_t\|_2^2
-\bigl(N(b-\tau)-C_2\bigr)\|\nabla v_t\|_2^2 \\
&\quad
-\tau^2\|v_{tt}\|_2^2
-\frac{\delta_0 c_\alpha}{2}
\bigl(\|\nabla u\|_2^2+\|\nabla w\|_2^2\bigr).
\end{aligned}
\end{equation}

To close the estimate, we still need to control the negative potential part in $\mathcal{E}(t)$. For the logarithmic primitive,
\begin{equation*}
-F(s)=\frac{1}{\gamma^2}|s|^\gamma-\frac{1}{\gamma}|s|^\gamma\ln|s|,
\qquad s\in\mathbb R.
\end{equation*}
Near the origin, the dominant term behaves like $|s|^\gamma |\ln |s||$, which is bounded by $C|s|^2$ because $\gamma>2$. For large $|s|$, one has $-F(s)\le 0$. Hence there exists $C_F>0$ such that
\begin{equation}\label{eq:F_quadratic_bound}
-F(s)\le C_F|s|^2
\qquad\text{for all } s\in\mathbb R.
\end{equation}
Applying Poincar\'e's inequality and the coercivity of $Q_\alpha$, we deduce
\begin{equation}\label{eq:potential_control_by_Q}
-\int_\Omega F(u)\,dx
\le
C_F\|u\|_2^2
\le
C_FC_P^2\|\nabla u\|_2^2
\le
\frac{C_FC_P^2}{c_\alpha}Q_\alpha(u,w).
\end{equation}
Consequently, the exact energy is controlled by the quadratic terms:
\begin{equation}\label{eq:E_controlled_by_quadratic_terms}
\mathcal{E}(t)
\le
C_E\Bigl(
\|\nabla u_t\|_2^2
+\|\nabla v_t\|_2^2
+\tau^2\|v_{tt}\|_2^2
+\|\nabla u\|_2^2
+\|\nabla w\|_2^2
\Bigr).
\end{equation}

We now choose
\begin{equation*}
N>\max\left\{C_1,\frac{C_2}{b-\tau}\right\}.
\end{equation*}
Then every coefficient on the right-hand side of \eqref{eq:L_derivative_preliminary} is strictly negative, and \eqref{eq:E_controlled_by_quadratic_terms} implies the existence of a constant
$c_3>0$ such that
\begin{equation}\label{eq:L_prime_controls_E}
\mathcal{L}'(t)\le -c_3\mathcal{E}(t).
\end{equation}
Finally, using the upper bound in \eqref{eq:L_equiv_E}, namely $\mathcal{L}(t)\le c_2\mathcal{E}(t)$, we infer
\begin{equation*}
-\mathcal{E}(t)\le -\frac{1}{c_2}\mathcal{L}(t).
\end{equation*}
Substituting this into \eqref{eq:L_prime_controls_E}, we arrive at
\begin{equation}\label{eq:closed_Lyapunov_decay}
\mathcal{L}'(t)\le -\frac{c_3}{c_2}\mathcal{L}(t)=:-\omega \mathcal{L}(t),
\qquad \omega>0.
\end{equation}
Integration of \eqref{eq:closed_Lyapunov_decay} yields
\begin{equation*}
\mathcal{L}(t)\le \mathcal{L}(0)e^{-\omega t},
\qquad t\ge0.
\end{equation*}
Using again the equivalence \eqref{eq:L_equiv_E}, we conclude that
\begin{equation*}
\mathcal{E}(t)\le C_0e^{-\omega t}
\qquad\text{for all } t\ge0,
\end{equation*}
for some constant $C_0>0$. This proves the uniform exponential stabilization.
\end{proof}

\section{Dynamics in the Unstable Set: Finite-Time Blow-up}

\subsection{Invariance of the Unstable Set}

We first show that the unstable set is positively invariant along the flow. This will allow us to propagate the sign condition $\mathcal{I}_\alpha(u(t),w(t))<0$ throughout the lifespan of the solution and, at the same time, recover a uniform positive lower bound for the quadratic part $Q_\alpha$.

\begin{proposition}[Invariance of the Unstable Set]\label{prop:unstable_invariance}
Let the initial data satisfy $\mathcal{E}(0)<d_\alpha$ and $\mathcal{I}_\alpha(u_0,w_0)<0$. Let $Y(t)$ be the local weak solution on its maximal existence interval $[0,T_{\max})$. Then
\begin{equation*}
\mathcal{I}_\alpha(u(t),w(t))<0
\qquad\text{and}\qquad
Q_\alpha(u(t),w(t))\ge \kappa_0>0
\end{equation*}
for all $t\in[0,T_{\max})$.
\end{proposition}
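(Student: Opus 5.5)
We argue by a first-contact argument, mirroring Proposition~\ref{prop:stable_invariance}, and then get the lower bound on $Q_\alpha$ from the same static estimate used in Lemma~\ref{lem:depth}.

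\textbf{Step 1: a static bound.} If $\mathcal I_\alpha(u,w)\le 0$ and $(u,w)\neq(0,0)$, then
\begin{equation*}
Q_\alpha(u,w)\le\int_\Omega|u|^\gamma\ln|u|\,dx\le C_*Q_\alpha(u,w)^p, \qquad p=\frac{\gamma+\eta}{2}>1,
\end{equation*}
exactly as in the proof of Lemma~\ref{lem:depth}. By \eqref{eq:coercivity}, $Q_\alpha(u,w)>0$, so dividing gives $Q_\alpha(u,w)\ge\kappa_0$.

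In particular, any pair with $\mathcal I_\alpha<0$ satisfies $Q_\alpha\ge\kappa_0$. Indeed, such a pair cannot be $(0,0)$, since $\mathcal I_\alpha(0,0)=0$. Equivalently, this is the contrapositive of Lemma~\ref{lem:local_positivity}: if $Q_\alpha\le\rho_0$ then $\mathcal I_\alpha\ge0$. So once the sign $\mathcal I_\alpha(u(t),w(t))<0$ is propagated, the second claim follows pointwise in $t$.

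\textbf{Step 2: propagation of the sign.} By Subsection~\ref{subsec:strong_continuity}, $Y\in C([0,T_{\max});\mathcal H)$, so $t\mapsto\mathcal I_\alpha(u(t),w(t))$ and $t\mapsto Q_\alpha(u(t),w(t))$ are continuous. Suppose the sign fails at some time, and let
\begin{equation*}
t_*:=\inf\{t:\mathcal I_\alpha(u(t),w(t))\ge0\}>0.
\end{equation*}
Then $\mathcal I_\alpha(u(t_*),w(t_*))=0$. On $[0,t_*)$ we have $\mathcal I_\alpha<0$, hence $Q_\alpha(u(t),w(t))\ge\kappa_0$ there by Step 1. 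Continuity of $Q_\alpha$ gives $Q_\alpha(u(t_*),w(t_*))\ge\kappa_0>0$, so $(u(t_*),w(t_*))\neq(0,0)$ and therefore $(u(t_*),w(t_*))\in\mathcal N_\alpha$.

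\textbf{Step 3: contradiction with the energy.} By the definition of $d_\alpha$, $\mathcal J_\alpha(u(t_*),w(t_*))\ge d_\alpha$. On the other hand, the kinetic terms in $\mathcal E$ are nonnegative, and \eqref{eq:exact_energy_identity_limit} shows $\mathcal E$ is nonincreasing. Hence
\begin{equation*}
\mathcal J_\alpha(u(t_*),w(t_*))\le\mathcal E(t_*)\le\mathcal E(0)<d_\alpha,
\end{equation*}
a contradiction. So $\mathcal I_\alpha(u(t),w(t))<0$ on $[0,T_{\max})$, and Step 1 then gives $Q_\alpha(u(t),w(t))\ge\kappa_0$ for all such $t$.

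The only delicate point is excluding an exit through the origin, where $\mathcal I_\alpha=0$ but the pair is not on $\mathcal N_\alpha$. The uniform gap $\kappa_0$, carried to $t_*$ by the continuity of $Q_\alpha$, is what rules this out. That continuity relies on the strong continuity established in Subsection~\ref{subsec:strong_continuity}.
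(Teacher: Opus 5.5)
Your proof is correct. The core is the same as the paper's: a first-contact time $t_*$, membership of $(u(t_*),w(t_*))$ in $\mathcal N_\alpha$, and the contradiction $d_\alpha\le\mathcal J_\alpha\le\mathcal E(t_*)\le\mathcal E(0)<d_\alpha$. The difference is in how you handle the origin and the bound $Q_\alpha\ge\kappa_0$.

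The paper propagates the sign first, with a case split at $t_*$. If $(u(t_*),w(t_*))=(0,0)$, it applies Lemma~\ref{lem:local_positivity} on a left neighbourhood of $t_*$. Only afterwards does it derive the lower bound, by a fibering argument: $g(\lambda)=\mathcal I_\alpha(\lambda u,\lambda w)$ is positive for small $\lambda$ and negative at $\lambda=1$. Some $\lambda_t\in(0,1)$ therefore places $(\lambda_t u,\lambda_t w)$ on $\mathcal N_\alpha$, and Lemma~\ref{lem:depth} with homogeneity gives $Q_\alpha\ge\lambda_t^{-2}\kappa_0>\kappa_0$.

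You instead rerun the estimate from inside the proof of Lemma~\ref{lem:depth} on all of $\{\mathcal I_\alpha\le 0\}\setminus\{(0,0)\}$, and you do this before propagating the sign. Continuity then carries the gap $\kappa_0$ to $t_*$, which excludes the origin with no case split and no appeal to Lemma~\ref{lem:local_positivity}. Your route is more elementary and economical.

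The paper's fibering projection uses Lemma~\ref{lem:depth} as a black box and gives a strict inequality. It is also exactly the device reused in the proof of Theorem~\ref{thm:blowup} to get $(\gamma-2)Q_\alpha+\frac{2}{\gamma}\|u\|_\gamma^\gamma>2\gamma d_\alpha$.

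One small quibble: your aside that Step~1 is ``equivalently'' the contrapositive of Lemma~\ref{lem:local_positivity} is loose. That contrapositive only yields $Q_\alpha>\rho_0$, not $Q_\alpha\ge\kappa_0$. Nothing in your argument depends on it.
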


\begin{proof}
Let
\begin{equation*}
\Sigma_+:=\{t\in[0,T_{\max}) : \mathcal{I}_\alpha(u(t),w(t))\ge 0\}.
\end{equation*}
Suppose, for contradiction, that $\Sigma_+$ is nonempty, and let $t_*:=\inf \Sigma_+>0$. By the strong continuity established in Subsection~\ref{subsec:strong_continuity}, the map $t\mapsto \mathcal{I}_\alpha(u(t),w(t))$ is continuous. Hence
\begin{equation*}
\mathcal{I}_\alpha(u(t_*),w(t_*))=0.
\end{equation*}

We distinguish two cases.

If $(u(t_*),w(t_*))\neq (0,0)$, then $(u(t_*),w(t_*))\in \mathcal{N}_\alpha$, and therefore, by the definition of the well depth,
\begin{equation*}
\mathcal{J}_\alpha(u(t_*),w(t_*))\ge d_\alpha.
\end{equation*}
On the other hand, the exact energy is non-increasing, and $\mathcal{J}_\alpha(u,w)\le \mathcal{E}(t)$ by definition. Thus,
\begin{equation*}
\mathcal{J}_\alpha(u(t_*),w(t_*))
\le \mathcal{E}(t_*)
\le \mathcal{E}(0)
< d_\alpha,
\end{equation*}
which is impossible.

If instead $(u(t_*),w(t_*))=(0,0)$, then $Q_\alpha(u(t_*),w(t_*))=0$. Since $Y\in C([0,T_{\max});\mathcal H)$, the map $t\mapsto Q_\alpha(u(t),w(t))$ is continuous. Hence there exists $\delta>0$ such that
\begin{equation*}
Q_\alpha(u(t),w(t))\le \rho_0
\qquad\text{for all } t\in(t_*-\delta,t_*].
\end{equation*}
Lemma~\ref{lem:local_positivity} then gives
\begin{equation*}
\mathcal{I}_\alpha(u(t),w(t))
\ge \frac14 Q_\alpha(u(t),w(t))
\ge 0
\qquad\text{for } t\in(t_*-\delta,t_*],
\end{equation*}
which contradicts the definition of $t_*$, since $\mathcal{I}_\alpha(u(t),w(t))$ must remain negative for $t<t_*$ close enough to $t_*$.

Therefore, the trajectory never exits the unstable set, and
\begin{equation*}
\mathcal{I}_\alpha(u(t),w(t))<0
\qquad\text{for all } t\in[0,T_{\max}).
\end{equation*}

It remains to prove the uniform lower bound for $Q_\alpha$. Fix $t\in[0,T_{\max})$, and define
\begin{equation*}
g(\lambda):=\mathcal{I}_\alpha(\lambda u(t),\lambda w(t))
=\lambda^2Q_\alpha(u(t),w(t))
-\lambda^\gamma\int_\Omega |u(t)|^\gamma \ln(\lambda|u(t)|)\,dx.
\end{equation*}
Since $\mathcal{I}_\alpha(u(t),w(t))<0$, we have $g(1)<0$. On the other hand, for $\lambda\in(0,1)$,
\begin{equation*}
g(\lambda)
=
\lambda^2\left(
Q_\alpha(u(t),w(t))
-\lambda^{\gamma-2}\int_\Omega |u(t)|^\gamma\ln|u(t)|\,dx
-\lambda^{\gamma-2}\ln\lambda\,\|u(t)\|_\gamma^\gamma
\right).
\end{equation*}
Because $\gamma>2$, we have
\begin{equation*}
\lambda^{\gamma-2}\to 0
\qquad\text{and}\qquad
\lambda^{\gamma-2}|\ln\lambda|\to 0
\qquad\text{as } \lambda\downarrow 0.
\end{equation*}
Moreover, $(u(t),w(t))\neq (0,0)$ implies $Q_\alpha(u(t),w(t))>0$. Hence the positive quadratic term dominates for sufficiently small $\lambda>0$, and so $g(\lambda)>0$ for all such $\lambda$.

By continuity, there exists $\lambda_t\in(0,1)$ such that $g(\lambda_t)=0$, namely,
\begin{equation*}
\mathcal{I}_\alpha(\lambda_tu(t),\lambda_tw(t))=0.
\end{equation*}
Thus $(\lambda_tu(t),\lambda_tw(t))\in\mathcal{N}_\alpha$, and Lemma~\ref{lem:depth} yields
\begin{equation*}
Q_\alpha(\lambda_tu(t),\lambda_tw(t))\ge \kappa_0.
\end{equation*}
Using the homogeneity of $Q_\alpha$, we conclude
\begin{equation*}
Q_\alpha(u(t),w(t))
=
\lambda_t^{-2}Q_\alpha(\lambda_tu(t),\lambda_tw(t))
\ge
\lambda_t^{-2}\kappa_0
>
\kappa_0.
\end{equation*}
This completes the proof.
\end{proof}

\subsection{Finite-Time Blow-up}

We now prove Theorem~\ref{thm:blowup}. The argument is based on Levine's concavity method. Compared with the classical wave equation, the main new difficulty is that the MGT component produces the unfavorable residual term $-2\tau(b-\tau)\|\nabla v_t\|_2^2$ in the second derivative of the concavity functional. The key observation is that, after reconstructing the logarithmic term through the exact energy identity, this residual is absorbed algebraically. This is the structural mechanism that closes the concavity argument without imposing any additional sign condition such as $\Psi'(0)>0$.

\begin{proof}[Proof of Theorem \ref{thm:blowup}]
We argue by contradiction. Assume that the solution is global, that is, $T_{\max}=\infty$. Let $T>0$ be an arbitrary terminal time. We define the
concavity functional
\begin{equation}\label{eq:psi}
\begin{aligned}
\Psi(t)
&=
\|u(t)\|_2^2+\|w(t)\|_2^2
+\int_0^t\Bigl(\|\nabla u(s)\|_2^2+(b-\tau)\|\nabla v(s)\|_2^2\Bigr)\,ds \\
&\quad
+(T-t)\Bigl(\|\nabla u_0\|_2^2+(b-\tau)\|\nabla v_0\|_2^2\Bigr)
+\beta(t+t_0)^2,
\end{aligned}
\end{equation}
where $\beta>0$ and $t_0>0$ are constants to be chosen later.

Differentiating with respect to time, and using
\begin{equation*}
\frac{d}{dt}\left(
\int_0^t \|\nabla u(s)\|_2^2\,ds
+
(T-t)\|\nabla u_0\|_2^2
\right)
=
\|\nabla u(t)\|_2^2-\|\nabla u_0\|_2^2
=
2\int_0^t(\nabla u,\nabla u_t)\,ds,
\end{equation*}
together with the analogous identity for the $v$-part, we obtain
\begin{equation}\label{eq:psi_single}
\Psi'(t)
=
2(u,u_t)+2(w,w_t)
+2\int_0^t\Bigl((\nabla u,\nabla u_t)+(b-\tau)(\nabla v,\nabla v_t)\Bigr)\,ds
+2\beta(t+t_0).
\end{equation}

We next differentiate once more. Testing the augmented system \eqref{eq:augmented} with $(u,w)$ yields
\begin{equation}\label{eq:psi_double}
\begin{aligned}
\Psi''(t)
&=
2\|u_t\|_2^2+2\|w_t\|_2^2+2(u,u_{tt})+2(w,w_{tt}) \\
&\quad
+2(\nabla u,\nabla u_t)+2(b-\tau)(\nabla v,\nabla v_t)+2\beta \\
&=
2\|u_t\|_2^2+2\|w_t\|_2^2
-2Q_\alpha(u,w)
+2\int_\Omega |u|^\gamma\ln|u|\,dx \\
&\quad
-2(b-\tau)(\nabla w,\nabla v_t)
+2(b-\tau)(\nabla v,\nabla v_t)
+2\beta.
\end{aligned}
\end{equation}

The last two terms yield the unfavorable residual $-2\tau(b-\tau)\|\nabla v_t\|_2^2$. To handle it, we reconstruct the logarithmic contribution through the exact energy identity. From the definition of $\mathcal{E}(t)$ in \eqref{eq:energy}, we obtain
\begin{equation}\label{eq:log}
2\int_\Omega |u|^\gamma\ln|u|\,dx
=
\gamma\|u_t\|_2^2
+\gamma\|w_t\|_2^2
+\gamma\tau(b-\tau)\|\nabla v_t\|_2^2
+\gamma Q_\alpha(u,w)
-2\gamma\mathcal{E}(t)
+\frac{2}{\gamma}\|u\|_\gamma^\gamma.
\end{equation}
Substituting \eqref{eq:log} into \eqref{eq:psi_double}, we arrive at
\begin{equation}
\begin{aligned}
\Psi''(t)
&=
(\gamma+2)\|u_t\|_2^2
+(\gamma+2)\|w_t\|_2^2
+(\gamma-2)\tau(b-\tau)\|\nabla v_t\|_2^2 \\
&\quad
+(\gamma-2)Q_\alpha(u,w)
-2\gamma\mathcal{E}(t)
+\frac{2}{\gamma}\|u\|_\gamma^\gamma
+2\beta.
\end{aligned}
\end{equation}
Thus the unfavorable MGT residual has been absorbed, and in fact converted into the positive term $(\gamma-2)\tau(b-\tau)\|\nabla v_t\|_2^2$, which is precisely where the assumption $\gamma>2$ enters.

Using the exact energy identity from Subsection~\ref{subsec:strong_continuity}, we further rewrite
\begin{equation*}
-2\gamma \mathcal{E}(t)
=
-2\gamma \mathcal{E}(0)
+
2\gamma\int_0^t
\Bigl(
\|\nabla u_t(s)\|_2^2
+
(b-\tau)\|\nabla v_t(s)\|_2^2
\Bigr)\,ds.
\end{equation*}
Consequently,
\begin{equation}
\begin{aligned}
\Psi''(t)
&=
(\gamma+2)\bigl(\|u_t\|_2^2+\|w_t\|_2^2\bigr)
+(\gamma-2)\tau(b-\tau)\|\nabla v_t\|_2^2 \\
&\quad
+2\gamma\int_0^t
\Bigl(
\|\nabla u_t(s)\|_2^2
+
(b-\tau)\|\nabla v_t(s)\|_2^2
\Bigr)\,ds \\
&\quad
+(\gamma-2)Q_\alpha(u,w)
+\frac{2}{\gamma}\|u\|_\gamma^\gamma
-2\gamma\mathcal{E}(0)
+2\beta.
\end{aligned}
\end{equation}

We now estimate $(\Psi'(t))^2$. Define
\begin{equation*}
X_1(t)
:=
\Psi(t)
-
(T-t)\Bigl(\|\nabla u_0\|_2^2+(b-\tau)\|\nabla v_0\|_2^2\Bigr),
\end{equation*}
and
\begin{equation*}
X_2(t)
:=
\|u_t(t)\|_2^2+\|w_t(t)\|_2^2
+\int_0^t
\Bigl(
\|\nabla u_t(s)\|_2^2
+
(b-\tau)\|\nabla v_t(s)\|_2^2
\Bigr)\,ds
+\beta.
\end{equation*}
Applying the Cauchy--Schwarz inequality to the terms in \eqref{eq:psi_single}, we find
\begin{equation*}
(\Psi'(t))^2
\le 4X_1(t)X_2(t)
\le 4\Psi(t)X_2(t).
\end{equation*}

Set
\begin{equation*}
\eta_0:=\frac{\gamma-2}{4}>0.
\end{equation*}
Then $4(1+\eta_0)=\gamma+2\le 2\gamma$, and therefore
\begin{equation}
\Psi''(t)-4(1+\eta_0)X_2(t)
\ge
(\gamma-2)Q_\alpha(u,w)
+\frac{2}{\gamma}\|u\|_\gamma^\gamma
-2\gamma\mathcal{E}(0)
-\gamma\beta.
\end{equation}

Since $\mathcal{I}_\alpha(u(t),w(t))<0$ by Proposition~\ref{prop:unstable_invariance}, the scaling argument used there provides $\lambda^*\in(0,1)$ such that
\begin{equation*}
\mathcal{I}_\alpha(\lambda^*u,\lambda^*w)=0.
\end{equation*}
Hence $(\lambda^*u,\lambda^*w)\in\mathcal{N}_\alpha$, so by the definition of $d_\alpha$ and the algebraic identity for $\mathcal{J}_\alpha$,
\begin{equation*}
(\gamma-2)Q_\alpha(u,w)+\frac{2}{\gamma}\|u\|_\gamma^\gamma
>
2\gamma d_\alpha.
\end{equation*}
It follows that
\begin{equation}
\Psi''(t)-4(1+\eta_0)X_2(t)
>
2\gamma(d_\alpha-\mathcal{E}(0))
-\gamma\beta.
\end{equation}
We now choose
\begin{equation*}
\beta:=2(d_\alpha-\mathcal{E}(0))>0.
\end{equation*}
With this choice,
\begin{equation*}
\Psi''(t)-4(1+\eta_0)X_2(t)\ge 0,
\end{equation*}
and therefore
\begin{equation*}
\Psi(t)\Psi''(t)-(1+\eta_0)(\Psi'(t))^2\ge 0
\qquad\text{for all } t\in[0,T].
\end{equation*}

Define
\begin{equation*}
S(t):=\Psi(t)^{-\eta_0}.
\end{equation*}
A direct computation shows that the above inequality is equivalent to $S''(t)\le 0$, so $S$ is concave on $[0,T]$. Hence,
\begin{equation*}
S(T)\le S(0)+S'(0)T.
\end{equation*}
To force a contradiction, we now choose the parameters so that the right-hand side becomes negative.

Set
\begin{equation*}
A:=\|\nabla u_0\|_2^2+(b-\tau)\|\nabla v_0\|_2^2,
\qquad
B:=\|u_0\|_2^2+\|w_0\|_2^2+\beta t_0^2.
\end{equation*}
Then
\begin{equation*}
\Psi(0)-\eta_0\Psi'(0)T
=
B+T\bigl(A-\eta_0\Psi'(0)\bigr).
\end{equation*}
Moreover,
\begin{equation*}
\Psi'(0)=2(u_0,u_1)+2(w_0,w_1)+2\beta t_0.
\end{equation*}
Since $\beta>0$ has already been fixed independently of $t_0$, we may choose $t_0>0$ sufficiently large so that
\begin{equation*}
A-\eta_0\Psi'(0)<0.
\end{equation*}
With this choice of $t_0$, the quantity $B$ is fixed, and we may then choose $T>0$ so large that
\begin{equation*}
T>\frac{B}{\eta_0\Psi'(0)-A}.
\end{equation*}
This yields
\begin{equation*}
\Psi(0)-\eta_0\Psi'(0)T<0,
\end{equation*}
and therefore
\begin{equation*}
S(T)<0.
\end{equation*}

This is impossible. Indeed, if the solution were global, then $Y\in C([0,T];\mathcal H)$ would imply $\Psi(T)<\infty$, while by definition $\Psi(t)>0$ for all $t\in[0,T]$. Hence
\begin{equation*}
S(T)=\Psi(T)^{-\eta_0}>0,
\end{equation*}
contradicting the previous conclusion. Therefore the assumption $T_{\max}=\infty$ is false, and thus
\begin{equation*}
T_{\max}<\infty.
\end{equation*}

It remains to identify the mechanism of blow-up. Suppose, for contradiction, that the gradient terms remain bounded up to $T_{\max}$, namely,
\begin{equation*}
\limsup_{t\uparrow T_{\max}}
\bigl(
\|\nabla u(t)\|_2^2+\|\nabla w(t)\|_2^2
\bigr)
\le M<\infty.
\end{equation*}
Then $u(t)$ is uniformly bounded in $H_0^1(\Omega)$ on $[0,T_{\max})$. Since $\gamma<2^*$, we may choose
\begin{equation*}
0 < \eta <\min\{\gamma-2,\,2^*-\gamma\}.
\end{equation*}
For the logarithmic primitive, we have
\begin{equation*}
|F(s)|\le C_\eta\bigl(|s|^{\gamma-\eta}+|s|^{\gamma+\eta}\bigr),
\qquad s\in\mathbb R.
\end{equation*}
By the Sobolev embeddings
\begin{equation*}
H_0^1(\Omega)\hookrightarrow L^{\gamma-\eta}(\Omega),
\qquad
H_0^1(\Omega)\hookrightarrow L^{\gamma+\eta}(\Omega),
\end{equation*}
the quantity $\int_\Omega |F(u(t))|\,dx$ remains uniformly bounded on $[0,T_{\max})$.

Returning to the exact energy identity and using $\mathcal{E}(t)\le \mathcal{E}(0)$, we obtain
\begin{equation*}
\frac12\|u_t\|_2^2
+\frac12\|w_t\|_2^2
+\frac{\tau(b-\tau)}{2}\|\nabla v_t\|_2^2
\le
\mathcal{E}(0)+\int_\Omega |F(u(t))|\,dx
\le C_M.
\end{equation*}
Thus all kinetic terms remain uniformly bounded as well. Combined with the assumed bound on $\|\nabla u(t)\|_2$ and $\|\nabla w(t)\|_2$, this shows that the full phase-space norm $\|Y(t)\|_{\mathcal H}$ stays uniformly bounded on $[0,T_{\max})$.

This contradicts the continuation principle in Proposition~\ref{prop:local}, which asserts that if $T_{\max}<\infty$, then
\begin{equation*}
\limsup_{t\uparrow T_{\max}}\|Y(t)\|_{\mathcal H}=\infty.
\end{equation*}
Therefore, the gradient terms cannot remain bounded, and we conclude that
\begin{equation*}
\limsup_{t\uparrow T_{\max}}
\bigl(
\|\nabla u(t)\|_2^2+\|\nabla w(t)\|_2^2
\bigr)
=
\infty.
\end{equation*}
This completes the proof.
\end{proof}

\end{document}